\documentclass{amsart}
\usepackage{graphicx} 
\usepackage{amsrefs}
\usepackage[utf8]{inputenc}
\usepackage{amsthm,enumerate}
\usepackage{amsmath}
\usepackage{mathrsfs}
\usepackage{amssymb}
\usepackage{enumitem}
\usepackage{mathtools}
\usepackage{hyperref}
\usepackage[top=1in, bottom=1in, right=1in, left=1in]{geometry}
\usepackage[capitalise]{cleveref}
\usepackage[dvipsnames]{xcolor}
\usepackage[UKenglish]{babel}
\usepackage{tikz-cd}
\usepackage{thmtools}

\theoremstyle{plain}
\newtheorem{thm}{Theorem}[section]
\newtheorem{cor}[thm]{Corollary}
\newtheorem{lemma}[thm]{Lemma}

\newtheorem{prop}[thm]{Proposition}
\newtheorem{question}[thm]{Question}

\theoremstyle{definition}
\newtheorem{defn}[thm]{Definition}

\newenvironment{theorem_no_number}[1][]{\begin{trivlist}
\item[\hskip \labelsep {\bfseries Theorem \def\temp{#1}\ifx\temp\empty  #1\else  #1\fi
.}] \itshape}  {\end{trivlist}}

\newcommand{\R}{\mathbb{R}}
\newcommand{\N}{\mathbb{N}}
\newcommand{\Q}{\mathbb{Q}}
\newcommand{\Z}{\mathbb{Z}}

\newcommand{\im}{\operatorname{im}}
\newcommand{\dom}{\operatorname{dom}}
\newcommand{\Sym}{\operatorname{Sym}}

\newcommand{\cw}{\operatorname{cmw}}
\newcommand{\id}{\operatorname{id}}
\newcommand{\makeset}[2]{\left\lbrace #1 \;\middle|\;
 \begin{tabular}{@{}l@{}}
   #2
  \end{tabular}
  \right\rbrace}

\keywords{symmetric groups, transformation monoids, commutative monoids}
\subjclass[2020]{20B30, 20M20, 54H15}
\usepackage{hyperref}
\usepackage{cleveref}
\title{Commutative decomposition of infinite symmetric groups and transformation monoids}
\author{Luna Elliott and Alex Levine}
\begin{document}
\maketitle
\begin{abstract}
The commutative subgroup width of a group \(G\) is the smallest \(k\) such
that there are abelian subgroups \(A_0,A_1,\ldots,A_{k-1}\leq G\) with
\(G=A_0A_1\ldots A_{k-1}\). Commutative (inverse) submonoid width is defined analogously.
In 2002, Ab\'{e}rt showed, rather surprisingly, that the commutative
subgroup width of the symmetric group on an infinite set is always finite. It
was later shown by Seress that it is always bounded above by \(14\). We answer a
question of Seress and show that in fact the commutative subgroup width of \(\Sym(\N)\) is
at most \(9\). 
We improve the best known lower bound to \(4\). We also study
standard monoid analogues of the symmetric group; showing that the commutative
submonoid widths of the full transformation monoid \(\N^\N\), the partial
transformation monoid \(P_\N\) and the symmetric inverse monoid \(I_\N\) are
exactly \(3\). We conclude by showing that the commutative inverse submonoid
width of any infinite symmetric inverse monoid is always infinite.

%
\end{abstract}
\section{Introduction}

A group \(G\) is said to have \emph{finite commutative subgroup width} if
there exists \(n\in \N\) such that \(G=C_1C_2\cdots C_n\), where
\(C_1,\ldots,C_n\) are commutative subgroups of \(G\). We write
\(\operatorname{cgw}(G)\) for the least such \(n\) if it exists, otherwise we
write \(\operatorname{cgw}(G)=\infty\). Commutative inverse submonoid width and
commutative submonoid width are defined analogously and are denoted by
\(\operatorname{ciw}\) and \(\operatorname{cmw}\) respectively. 
As we will see
later, these all coincide in the case of a group, although
\(\operatorname{ciw}(M)\) and \(\operatorname{cmw}(M)\) may differ for an
inverse monoid \(M\).

A related notion is that of \emph{bounded generation}, also called \emph{finite
cyclic width}, which is the same definition as having finite commutative
subgroup width, except requiring that all of the relevant subgroups are cyclic,
rather than just commutative. Naturally, bounded generation implies finite
commutative subgroup width. There exist various studies of bounded generation,
thus giving (finite) upper bounds for the commutative subgroup width in a number
of examples, including \(\operatorname{SL}_n(\Z)\) for \(n \geq 3\)
\cites{MiklosLubotzkyPyber, CarterKeller, Loukanidis, NikolovSury,
MorganRapinchukSury, ErovenkoRapinchuk, Morrisbutnot}.

Ab\'{e}rt directly studied commutative subgroup width in symmetric groups,
showing that if \(X\) is an infinite set, then the symmetric group \(\Sym(X)\)
on \(X\) has finite commutative subgroup width \cite{abert2002symmetric} (of at most \(289\)). 
Seress drastically improved this upper bound to
\(14\) \cite{abelianprod}. Ito also proved that \(3\) is a lower bound for any
non-metabelian group \cite{Ito}, which of course includes \(\Sym(X)\), as Seress
also noted. Ab\'{e}rt also considers the finite symmetric groups, providing
upper and lower bounds in each case (the latter of which are generally much
sharper than \(3\)).

Our purpose is to study the `standard' analogues of the infinite symmetric group
that occur in group and semigroup theory, including the symmetric group itself.
We begin with the symmetric group on the naturals. We sharpen the upper
bound to \(9\) and the lower bound to \(4\). This proves a conjecture of 
Seress \cite{abelianprod}.

\begin{theorem_no_number}[A]
   \(4 \leq \operatorname{cgw}(\Sym(\N)) \leq 9\). 
\end{theorem_no_number}

Additionally, we consider the three analogues of the symmetric group in
semigroup theory: the full transformation monoid \(\N^\N\), the partial
transformation monoid \(P_\N\) and the symmetric inverse monoid \(I_\N\). In
each case, we show that commutative submonoid width is exactly \(3\), contrasting
with the symmetric group.

\begin{theorem_no_number}[B]
    \(\operatorname{cmw}(\N^\N) = \operatorname{cmw}(P_\N) = \operatorname{cmw}(I_\N) = 3\).
\end{theorem_no_number}

Unlike the case of groups where commutative subgroup and submonoid widths coincide, this
is not the case for inverse monoids. Indeed, we show that \(I_\N\) has infinite
commutative inverse submonoid width, in stark contrast with its commutative submonoid width
of \(3\).

\begin{theorem_no_number}[C]
    \(\operatorname{ciw}(I_\N) = \infty\).
\end{theorem_no_number}

In order to prove these results, we use some topological algebra; in particular,
we apply the Baire Category Theorem to these monoids when viewed as topological
monoids. We aim to make this accessible to non-topologists, and all topology
used is fully contained within Section~\ref{sec:top}. The use of the Baire
Category Theorem to show algebraic facts in this manner is not new: it has been
used previously to show that every element of \(\Sym(\N)\) (and certain other
examples) is a commutator, see for example \cite[Proposition
4.2.12]{macpherson}. 
 The upper bounds do not require this topological argument, and the upper bounds for \(X^X\), \(I_X\), and \(P_X\) can be applied to any infinite set \(X\). Our argument for the upper bound on \(\Sym(X)\) requires only that \(|X|\) is an infinite regular cardinal.
An issue with applying our topological methods for
higher cardinals is that the usual version of the Baire Category Theorem only
applies to completely metrisable topological spaces, and (at least using the
analogue of the topologies we have) only \(\Sym(\N)\), \(\N^\N\), \(P_\N\) and
\(I_\N\) have this property. There do exist analogues of the Baire Category
Theorem for higher cardinalities, however even if we resolved this issue, our
arguments use the fact that if \(\kappa\) is a non-zero cardinal strictly below
\(\aleph_0\), then \(\aleph_0^{\kappa}=\aleph_0\). If the continuum hypothesis
is false, then even \(\aleph_1\) will fail to satisfy this condition.

Section~\ref{sec:top} covers the proof of \cref{prop:topfact}, which is the only topological proof we include. Section~\ref{sec:symgroup} covers the proof of Theorem A, and includes a corollary about the commutative submonoid width of the monoid of injective transformations
of \(\N\). Section~\ref{sec:monoids} covers the proof of Theorem B.
\section{Some Topology}\label{sec:top}
We require a proposition that can be proved easily using topological methods. We
provide a short proof of this, but since the proof is topological, we will first
recall the prerequisite concepts, before we give the proof. For sets \(X,
Y\subseteq \N\), we denote the set of bijections from \(X\) to \(Y\) by \(
\operatorname{Bijections}(X,Y)\).   
\begin{prop}\label{prop:topfact}
    Let \(X\) and \(Y\) be infinite subsets of \(\N\). Suppose \((S_i)_{i \in \N}\) 
    is a sequence of subsets of \(\operatorname{Bijections}(X, Y)\) satisfying
    \[
        \bigcup_{i\in \N} S_i=\operatorname{Bijections}(X,Y).
    \]
    Then there exists \(i\in \N\) and \(p\in I_{\N}\) from
    \(X\) to \(Y\) such that:
    \begin{itemize}
        \item \(\dom(p)\) is finite;
        \item \(S_i\) is uncountable; 
        \item For all \(p' \in I_\N\) with finite domain extending \(p\), there
        exists \(s\in S_i\) with \(s|_{\dom(p')} =p'\). 
    \end{itemize}
\end{prop}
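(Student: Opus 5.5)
We will apply the Baire Category Theorem to \(B=\operatorname{Bijections}(X,Y)\) with the pointwise convergence topology. The basic open sets are
\[
[p]=\{s\in B \mid s|_{\dom(p)}=p\},
\]
where \(p\) ranges over finite partial injections from \(X\) to \(Y\). The third bullet says exactly that \(S_i\) meets every basic open subset of \([p]\). In other words, it says that \(S_i\) is dense in \([p]\), or equivalently that the closure \(\overline{S_i}\) contains \([p]\).

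First we check that \(B\) is a Baire space. Enumerate \(X=\{x_0,x_1,\ldots\}\) and \(Y=\{y_0,y_1,\ldots\}\). The usual metric on \(\Sym(\N)\) adapts directly: set \(d(s,t)=2^{-n}\), where \(n\) is least such that either \(s(x_n)\neq t(x_n)\) or \(s^{-1}(y_n)\neq t^{-1}(y_n)\). This metric induces the pointwise topology, since agreement of inverses on finitely many points is also an open condition. It is complete: a Cauchy sequence eventually stabilises on each \(x_n\) and, for its inverses, on each \(y_n\). The limits therefore define a map and its inverse, and these compose to the identity, so the limit is a bijection. One could instead identify \(B\) with \(\Sym(\N)\) via fixed bijections \(X\to\N\) and \(Y\to\N\), and quote that \(\Sym(\N)\) is Polish. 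Since we promised a self-contained account, we will include the short completeness argument.

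Next, \(B=\bigcup_i S_i\subseteq\bigcup_i\overline{S_i}\). By Baire, some \(\overline{S_i}\) has nonempty interior, so it contains a basic open set \([p]\) with \(p\) finite. The partial map \(p\) is a finite partial bijection from \(X\) to \(Y\), which we view as an element of \(I_\N\) with finite domain. For any finite \(p'\supseteq p\) with \([p']\neq\emptyset\), the set \([p']\) is open and contained in \(\overline{S_i}\), so it meets \(S_i\). This gives the third bullet. We must make sure that \(p'\) is understood as a partial injection from \(X\) to \(Y\). If \(p'\) has \(\dom(p')\not\subseteq X\) or \(\im(p')\not\subseteq Y\), then no bijection \(X\to Y\) restricts to it, so the statement should be read with \(p'\) from \(X\) to \(Y\). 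In that case \([p']\) is nonempty because \(X\) and \(Y\) are infinite: extend \(p'\) by a back-and-forth between the infinite cofinite remainders.

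Finally, uncountability. Choose \(x\in X\setminus\dom(p)\). For each of the infinitely many \(y\in Y\setminus\im(p)\), the set \([p\cup\{(x,y)\}]\) meets \(S_i\), so \(S_i\) is infinite. To get uncountability we use Baire again. Suppose \(S_i=\{s_0,s_1,\ldots\}\) were countable. Each singleton \(\{s_j\}\) is closed and nowhere dense, because every basic open set contains at least two bijections. Then \([p]\subseteq\overline{S_i}\) would give \([p]\) as a countable union of nowhere dense sets plus a meagre remainder, which contradicts Baire.

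That last inclusion is not quite right, since \(\overline{S_i}\) may be larger than \(S_i\). The argument needs adjusting, and this is the main obstacle. The fix is to apply Baire more carefully. Let \(C\) be the union of the countable \(S_i\)'s; it is meagre. Then \(B\setminus C\) is covered by the uncountable \(S_i\)'s, and \(B\setminus C\) is nonmeagre in every open set. Among the \(\overline{S_i}\) with \(S_i\) uncountable, one must have nonempty interior. Otherwise \(B\) would be the union of \(C\) and countably many nowhere dense sets, hence meagre, contradicting Baire. We choose \(i\) and \(p\) from this one, and the argument above then gives all three bullets.
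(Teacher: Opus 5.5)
Your proof is correct and is essentially the paper's argument. You apply the Baire Category Theorem to \(\operatorname{Bijections}(X,Y)\) and obtain a basic open set \([p]\subseteq\overline{S_i}\), which gives the third bullet; you also read \(p'\) as a partial bijection from \(X\) to \(Y\), exactly as the paper's proof does. The differences are only cosmetic. You check complete metrisability with an explicit metric rather than citing the countable-intersection-of-open-sets criterion. Your self-corrected step of discarding the countable \(S_i\) is a roundabout version of the paper's one-line move: choosing an \(S_i\) that is not meagre gives both that \(S_i\) is uncountable (countable sets are meagre here) and that \(\overline{S_i}\) has nonempty interior.
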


The first notion we will need before we can prove \cref{prop:topfact} is the definition of the topology
we will be using. In this case, we need only define the topology on \(P_\N\).
Recall first that \(P_{\N}\) denotes the set of functions from subsets of \(\N\) to \(\N\) and if \(f,g\in P_\N\) then \(fg\) is the function \(x\mapsto ((x)f)g\) with domain \(\dom(f)\cap (\dom(g))f^{-1}\).
The monoid \(I_\N\leq P_\N\) consists of the bijections between subsets of \(\N\), and \(\N^\N\) is the monoid of elements of \(P_\N\) with domain \(\N\).
\begin{defn}\label{def:topologyonpartials}
     We consider \(P_{\N}\) as a monoid with the topology with basis comprising the sets: 
     \[U_{p, X} \coloneqq \makeset{f\in P_{\N}}{\(f|_{X} =p\)},\]
     where \(p\in P_{\N}\) and \(X\subseteq \N\) is a finite set containing \(\dom(p)\).
    For infinite sets \(X, Y\subseteq \N\), we consider \( \textrm{Bijections}(X,Y)\) to be a topological space with the subspace topology inherited from \(P_{\N}\).    
\end{defn}
From \cite{Lunaetal}*{Theorem 5.10}, the topology on \(P_{\N}\) is completely metrisable.
It is well-known (see for example \cite{KechrisBook}*{Theorem 3.11}) that a
subspace of a completely metrisable space is completely metrisable if and only
if it can be expressed as a countable intersection of open sets. Note that the complete
metric used here may not be the `natural' one inherited from the superspace.
We have that for any finite \(D \subseteq X\), the set
\[
    \makeset{f\in P_{\N}}{\(f|_{D}\) is an injective map from \(D\cap X\) to \(Y\)},
\]
is open. Moreover, the intersection of all of these sets is the set of injective
maps from \(X\) to \(Y\). Similarly, each of the sets
\(\makeset{f\in P_{\N}}{\(y\in \im(f)\)}\)
for \(y\in Y\) are open. The intersection of all of these sets is precisely \(
\textrm{Bijections}(X,Y)\). In particular, the set \( \textrm{Bijections}(X,Y)\)
is always completely metrisable with the subspace topology from \(P_\N\). From the case \(X=Y=\N\), we have that
\(\Sym(\N)\) is completely metrisable. We can thus apply the Baire Category
Theorem to these spaces. To do this we require the definition of meagre.

\begin{defn}
    A subset \(S\) of a topological space \(X\) is called \emph{nowhere dense} if \(\bar{S}\) contains
    no non-empty open set. A subset \(M\) is \emph{meagre} if it is a countable union of nowhere
    dense sets.
\end{defn}

For a proof of the Baire Category Theorem (including the version we state
below), we refer the reader to \cite{KechrisBook}*{Theorems 8.1 and 8.4}.

\begin{thm}[Baire Category Theorem]
    If \(S\) is a completely metrisable topological space, then a meagre subset of \(S\) contains no non-empty open subset of \(S\).
    In particular, if \(S\neq \varnothing\), then \(S\) is not a meagre subset of itself.
\end{thm}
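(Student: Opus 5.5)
We prove the Baire Category Theorem by the standard nested-balls argument. Fix a complete metric \(d\) on \(S\) that induces its topology; such a \(d\) exists by the definition of completely metrisable. Write \(B(x,r)\) for the open ball and \(\bar B(x,r)=\{y : d(x,y)\le r\}\) for the closed ball. We argue by contradiction. Suppose \(M=\bigcup_{k\in\N} N_k\) is meagre, each \(N_k\) nowhere dense, and suppose \(M\) contains a non-empty open set \(U\).

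We build points \(x_k\) and radii \(0<r_k<2^{-k}\) recursively so that
\[\bar B(x_{k+1},r_{k+1})\subseteq B(x_k,r_k), \qquad \bar B(x_k,r_k)\subseteq U, \qquad \bar B(x_k,r_k)\cap \overline{N_k}=\varnothing.\]
For the base step, let \(V_0=U\). At stage \(k\), let \(V_k\) be \(U\) when \(k=0\) and \(B(x_{k-1},r_{k-1})\) otherwise; in either case \(V_k\) is non-empty and open. Since \(\overline{N_k}\) contains no non-empty open set, \(V_k\not\subseteq\overline{N_k}\). Hence \(V_k\setminus\overline{N_k}\) is a non-empty open set. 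Choose \(x_k\) in it, and choose \(r_k<2^{-k}\) so small that \(B(x_k,2r_k)\subseteq V_k\setminus\overline{N_k}\). Then \(\bar B(x_k,r_k)\subseteq B(x_k,2r_k)\), so \(\bar B(x_k,r_k)\) avoids \(\overline{N_k}\) and lies inside \(V_k\subseteq U\). Using \(\bar B(x_k,r_k)\) rather than the open ball is the one point needing care: it makes the final limit stay inside every ball.

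For \(m\ge k\), the nesting gives \(x_m\in B(x_k,r_k)\), so \(d(x_m,x_k)<2^{-k}\). Thus \((x_k)\) is Cauchy, and by completeness it converges to some \(x\in S\). Each closed ball \(\bar B(x_k,r_k)\) is closed and contains \(x_m\) for all \(m\ge k\), so it contains \(x\). Therefore \(x\in U\) and \(x\notin\overline{N_k}\) for every \(k\), so \(x\notin M\). This contradicts \(U\subseteq M\).

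For the second sentence of the theorem, if \(S\ne\varnothing\) then \(S\) is itself a non-empty open subset of \(S\). If \(S\) were meagre in itself, the first part would be contradicted, so \(S\) is not meagre in itself.
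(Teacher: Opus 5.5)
Your proof is correct and is the standard argument: the paper gives no proof of its own but cites Kechris's book (Theorems 8.1 and 8.4), where the proof is this nested closed-balls construction in a complete metric, applied to the dense open sets \(S\setminus\overline{N_k}\) and then restated in terms of meagre sets. You fold those two steps into one direct contradiction argument, which is essentially the same approach.
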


\begin{proof}[Proof of \cref{prop:topfact}]
Note that a countable union of meagre sets is meagre.
Thus, by the Baire Category Theorem, there exists \(i\in \N\) such that \(S_i\) is not meagre.
As singleton sets are trivially meagre, it follows that \(S_i\) is uncountable.
It also follows that \(\overline{S_i}\) contains a non-empty open set.
In particular, from \cref{def:topologyonpartials}, there exists \(p\in P_\N\)  and finite \(D\subseteq \N\) such that \(\dom(p)\subseteq D\) and
\(\overline{S_i}\) contains 
\[U \coloneqq \makeset{f\in \operatorname{Bijections}(X,Y)}{\(f|_{D} =p\)}\neq \varnothing.\]
The fact that this set is non-empty implies that \(p\) is a partial bijection from \(X\) to \(Y\). Suppose for a contradiction that there is a partial bijection \(p'\)  from \(X\) to \(Y\) with finite domain extending \(p\) such that there is no \(s\in S_i\) with \(s|_{\dom(p')}=p'\).
It follows that 
\[\makeset{f\in \operatorname{Bijections}(X,Y)}{\(f|_{D\cup \dom(p')} =p'\)}\]
is a non-empty open subset of \(U\) disjoint from \(S_i\). This contradicts the assertion that \(\overline{S_i}\supseteq U\).
\end{proof}

\section{The Symmetric Group}
\label{sec:symgroup}

This section covers our study of the commutative subgroup width of \(\Sym(\N)\) and
\(\Sym(X)\) for arbitrary infinite sets \(X\). We begin with a short proof of the
fact that for a group, the commutative submonoid, subgroup and inverse submonoid
width all coincide. We thus hereafter use the subgroup width.

\begin{lemma}\label{independance_of_definition}
    If \(G\) is a group, then \(\operatorname{cgw}(G) = \operatorname{cmw}(G)
    = \operatorname{ciw}(G)\).
\end{lemma}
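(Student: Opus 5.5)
The plan is to show the chain of inequalities \(\operatorname{cmw}(G) \leq \operatorname{ciw}(G) \leq \operatorname{cgw}(G) \leq \operatorname{cmw}(G)\), interpreting each quantity as the least length of a factorisation \(G = C_1C_2\cdots C_n\) with the \(C_j\) ranging over commutative submonoids, commutative inverse submonoids and commutative subgroups respectively (with the value \(\infty\) if no such factorisation exists). The first two inequalities are immediate from inclusions between the classes of objects. Every subgroup of \(G\) is an inverse submonoid, since it contains the identity and, for each element, its group inverse, which is the unique inverse in the semigroup sense. Every inverse submonoid is in turn a submonoid. Hence any factorisation witnessing \(\operatorname{cgw}(G)\) also witnesses an upper bound for \(\operatorname{ciw}(G)\), and any factorisation witnessing \(\operatorname{ciw}(G)\) also bounds \(\operatorname{cmw}(G)\).

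The substantive step is \(\operatorname{cgw}(G) \leq \operatorname{cmw}(G)\). Suppose \(G = C_1\cdots C_n\) with each \(C_j\) a commutative submonoid. For each \(j\), let \(\langle C_j\rangle\) be the subgroup of \(G\) generated by \(C_j\). This subgroup consists of finite products of elements of \(C_j\) and their inverses. Since the elements of \(C_j\) pairwise commute, their inverses also commute with each other and with the elements of \(C_j\): if \(ab = ba\) then \(a^{-1}b = ba^{-1}\), as seen by multiplying by \(a^{-1}\) on both sides. So \(\langle C_j\rangle\) is generated by a set of pairwise commuting elements and is therefore abelian. Moreover \(C_j \subseteq \langle C_j\rangle\), so
\[
G = C_1\cdots C_n \subseteq \langle C_1\rangle \cdots \langle C_n\rangle \subseteq G.
\]
Thus \(\operatorname{cgw}(G) \leq n\). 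If \(\operatorname{cmw}(G) = \infty\) the inequality is trivial, so this covers all cases.

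I do not expect a real obstacle here. The only point needing care is the claim that a group generated by pairwise commuting elements is abelian. This follows by noting that the centraliser of a set is a subgroup, applied twice: \(C_j\) lies in the centraliser of \(C_j\), so \(\langle C_j\rangle\) does too; hence \(C_j\) lies in the centraliser of \(\langle C_j\rangle\), and so does \(\langle C_j\rangle\). I would also add a sentence fixing the convention that the empty product, for \(n = 0\), equals \(\{1\}\), so that the trivial group is handled uniformly.
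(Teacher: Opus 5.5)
Your proposal is correct and follows essentially the same route as the paper: the easy inequalities come from inclusions between the classes of subobjects, and \(\operatorname{cgw}(G)\leq\operatorname{cmw}(G)\) comes from replacing each commutative submonoid by the abelian subgroup it generates. The only cosmetic difference is that the paper notes directly that inverse submonoids of a group coincide with its subgroups, whereas you use only the inclusion of subgroups in inverse submonoids inside your chain \(\operatorname{cmw}(G)\leq\operatorname{ciw}(G)\leq\operatorname{cgw}(G)\leq\operatorname{cmw}(G)\); both are equally valid.
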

\begin{proof}
    First note that inverse submonoids of \(G\) and subgroups of \(G\) coincide,
    and so we need only show that
    \(\operatorname{cgw}(G)=\operatorname{cmw}(G)\). As subgroups are
    submonoids, we trivially have \(\operatorname{cmw}(G)\leq
    \operatorname{cgw}(G)\). If \(\operatorname{cmw}(G)=\infty\), then clearly
    \(\operatorname{cmw}(G)\geq \operatorname{cgw}(G)\). Otherwise let \(k
    =\operatorname{cmw}(G)\), and let \(C_0,C_1,\ldots, C_{k-1}\) be commutative
    submonoids of \(G\) such that \(G=C_0C_1\ldots C_{k-1}\). It follows that
    the same equality holds when we replace each \(C_i\) with the subgroup it
    generates. As the group generated by a set of commuting elements is abelian,
    it follows that \(k\geq \operatorname{cgw}(G)\) as required. \end{proof}

The following lemma is more general than we require in order to study the
commutative subgroup width of infinite symmetric groups. However, we nonetheless
prove it in this level of generality to allow us to apply it in
Section~\ref{sec:monoids}.
\begin{lemma}
    \label{equiv-A-inj}
    Let \(A \leq P_{\N}\) be a commutative submonoid. Write \(A_\textrm{inj}\)
    to denote the submonoid of \(A\) of all fully defined injective functions in
    \(A\). Let \(\sim\) be the least equivalence relation containing the binary
    relation:
    \[
        \{(x, y) \in \N^2 \mid \textrm{ there exists } f \in A_\textrm{inj} \textrm{ with }
        (x)f = y\}.
    \]
    If \(x \sim y\) and \(f, g \in A_\textrm{inj}\), then
    \((x)f = (x)g\) if and only if \((y)f = (y)g\).
\end{lemma}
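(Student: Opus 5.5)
Since \(\sim\) is the least equivalence relation containing the given binary relation \(R\), I will reduce to a single step of \(R\). Consider the property \(P(x,y)\): for all \(f,g\in A_\textrm{inj}\), \((x)f=(x)g\) if and only if \((y)f=(y)g\). This property is reflexive, symmetric and transitive, because it is a biconditional quantified over the same \(f,g\). So the set of pairs satisfying \(P\) is an equivalence relation. It therefore suffices to show that \(P(x,y)\) holds whenever \((x)h=y\) for some \(h\in A_\textrm{inj}\). Minimality of \(\sim\) then gives \(P\) on all of \(\sim\).

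Now fix \(h\in A_\textrm{inj}\) with \((x)h=y\), and let \(f,g\in A_\textrm{inj}\). Every element of \(A_\textrm{inj}\) is fully defined, and \(A\) is commutative, so \(fh=hf\) and \(gh=hg\) as total functions. Hence
\[
(y)f=((x)h)f=(x)hf=(x)fh=((x)f)h,
\]
and likewise \((y)g=((x)g)h\).

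For the forward direction, if \((x)f=(x)g\), then applying \(h\) gives \((y)f=(y)g\). For the converse, if \((y)f=(y)g\), then \(((x)f)h=((x)g)h\). Since \(h\) is injective, this forces \((x)f=(x)g\).

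I expect no real obstacle. The only care needed is in the partial-function composition conventions of \(P_\N\). Because \(f,g,h\) lie in \(A_\textrm{inj}\), they are total, so the composites \(hf\) and \(fh\) are total and equal, and the computations above involve no domain issues. Injectivity of \(h\) is used only in the reverse implication.
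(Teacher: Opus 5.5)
Your proof is correct and is essentially the paper's argument. Both use the commutation \((y)f=((x)f)h\) for a move \((x)h=y\) and invoke injectivity of \(h\) for the remaining implication. The paper proves one implication for both orientations \((x)h=y\) and \((y)h=x\) and then passes to the transitive closure. You instead prove the full biconditional for one orientation and observe that the biconditional relation is itself an equivalence relation, which makes the closure step slightly tidier.
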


\begin{proof}
    It suffices to show that if \(x \sim y\) and \(f, g \in A_\textrm{inj}\)
    then \((x)f = (x)g\) implies \((y)f = (y)g\); the converse follows
    by symmetry. So let \(x, y \in \N\) be such that \(x \sim y\) and
    let \(f, g \in A_\textrm{inj}\) be such that \((x)f = (x)g\). We
    first consider the cases where \((x)h = y\) or \((y)h = x\) for
    some \(h \in A_\textrm{inj}\). If \((x)h = y\), then
    \[
        (y)f = (x)hf = (x)fh = (x)gh = (x)hg = (y)g,
    \]
    as required. Similarly, if \((y)h = x\), then we have 
    \[
        (y)fh = (y)hf = (x)f = (x)g = (y)hg = (y)gh. 
    \]
    Since \(h\) is injective, it follows that \((y)f = (y)g\), as
    required. We have thus shown that the result holds when
    \((x)h = y\) or \((y)h = x\) for some \(h \in A_\textrm{inj}\). Since
    \(\sim\) is just the transitive closure of such `elementary moves', it
    follows that \(x \sim y\) is sufficient to reach the conclusion
    that \((y)f = (y)g\).
\end{proof}

We now aim to prove that \(\operatorname{cgw}(\Sym(\N) > 3\). Our proof is by
contradiction, and thus we assume that \(\Sym(\N)\) is the product of three
abelian subgroups: \(A\), \(B\) and \(C\). We begin with a lemma that discusses
\(\sim\)-classes as defined in \cref{equiv-A-inj}. This covers one of the four
cases that arise dependent on whether or not every \(\sim_A\)-class and every
\(\sim_C\)-class is finite, by showing that the case of when neither is true
never occurs. For the proof, recall that a \emph{moiety} is a subset \(M\) of a
set \(X\) such that \(|M| = |X| = |X \setminus M|\).

\begin{lemma}\label{lem:LBSym1}
 Suppose that \(A,B,C\leq \Sym(\N)\) are abelian groups such that \(ABC=\Sym(\N)\).
Let \(\sim_{A}\), \(\sim_B\) and \(\sim_C\) be as in \cref{equiv-A-inj}. Then at least one
of the following holds:
\begin{itemize}
    \item Every \(\sim_A\)-class is finite;
    \item Every \(\sim_C\)-class is finite.
\end{itemize}
\end{lemma}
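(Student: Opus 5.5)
The plan is to argue by contradiction. Suppose there is an infinite \(\sim_A\)-class \(O\) and an infinite \(\sim_C\)-class \(P\). Since \(A\) is a group, \(A_{\textrm{inj}} = A\), so \(O\) is an \(A\)-orbit, and similarly \(P\) is a \(C\)-orbit. By \cref{equiv-A-inj}, if \(a,a'\in A\) agree at one point of \(O\), they agree on all of \(O\). Hence the restriction \(a|_O\) is determined by \((x_0)a\) for a fixed \(x_0\in O\), and there are only countably many possible restrictions \(a|_O\). Likewise there are only countably many restrictions \(c|_P\) with \(c\in C\).

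For each pair \((\alpha,\gamma)\) of such restrictions, let \(S_{\alpha,\gamma}\) be the set of \(g\in\Sym(\N)\) admitting a factorisation \(g=abc\) with \(a\in A\), \(b\in B\), \(c\in C\), \(a|_O=\alpha\) and \(c|_P=\gamma\). These countably many sets cover \(\Sym(\N)=\operatorname{Bijections}(\N,\N)\). By \cref{prop:topfact} there are a fixed pair \((\alpha,\gamma)\) and a finite partial bijection \(p\) such that every finite partial bijection \(p'\) extending \(p\) is realised by some \(g\in S_{\alpha,\gamma}\).

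For \(g\in S_{\alpha,\gamma}\) we have \(b=a^{-1}gc^{-1}\). So for \(y\in O\) with \((y)\alpha^{-1}g\in P\), the value
\[(y)b=(y)\alpha^{-1}g\gamma^{-1}\]
is determined by \(\alpha\), \(\gamma\) and the value of \(g\) at \((y)\alpha^{-1}\). Dually, \(b^{-1}=cg^{-1}a\), so on points of \(P\) the map \(b^{-1}\) is determined by \(g^{-1}\) together with \(\gamma\) and \(\alpha\), for the points that \(g^{-1}\) sends into \(O\).

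Since \(O\) and \(P\) are infinite and \(p\) is finite, we can freely extend \(p\) at fresh points of \(O\) and \(P\). This lets us realise elements \(g_1,g_2\in S_{\alpha,\gamma}\), with corresponding \(b_1,b_2\in B\), whose values on finitely many chosen points of \(O\) and \(P\) are prescribed.

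The contradiction should come from commutativity of \(B\), via \cref{equiv-A-inj} applied to \(B\). Choose \(y\in O\) and \(z\in P\), both fresh, and arrange \((y)b_1=(y)b_2=z\). Then \(b_2^{-1}b_1\in B\) fixes \(y\). Since \(z\) lies in the \(B\)-orbit of \(y\), \cref{equiv-A-inj} forces \((z)b_1=(z)b_2\), and more generally forces \(b_1\) and \(b_2\) to agree on the entire \(B\)-orbit of \(y\). We then try to violate this by also prescribing \(g_1\) and \(g_2\) differently at a second fresh point whose \(b_i\)-values are controlled and which lies in that \(B\)-orbit. A natural candidate is a point reached from \(z\) by \(b_1^{-1}\) or \(b_2^{-1}\), whose value lands in \(O\) because we control \(g_i^{-1}\) on \(P\).

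The main obstacle is this last step. We can only control \(b\) on (preimages in) \(O\) landing in \(P\), and \(b^{-1}\) on \(P\) landing in \(O\), so every point at which we evaluate must be shuttled between \(O\) and \(P\) along arrows we have prescribed. I would first try a zig-zag. Prescribe \(y\mapsto z\) for both \(b_1\) and \(b_2\). Then prescribe \((z')b_1^{-1}=y'\) and \((z')b_2^{-1}=y''\) with \(y'\neq y''\) in \(O\), and also \((y')b_1=z'\). Here \(y'\), \(y''\), \(z'\) are fresh points with \(y'\) and \(y''\) chosen in \(O\) and \(z'\) in \(P\). The aim is to get two points in one \(B\)-orbit on which \(b_1\) and \(b_2\) provably agree and provably differ. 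Some care with the finiteness of \(p\) and the freshness of points (avoiding \(\dom(p)\), \(\im(p)\) and their images under \(\alpha^{\pm1}\) and \(\gamma^{\pm1}\)) will be needed to make all the prescriptions consistent.
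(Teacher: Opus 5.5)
\textbf{There is a genuine gap: the proof stops where the contradiction has to be produced.}

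Your setup is sound and matches the paper's. The countable cover of \(\Sym(\N)\) by the sets \(S_{\alpha,\gamma}\), the application of \cref{prop:topfact}, and the formula \((y)b=(y)\alpha^{-1}g\gamma^{-1}\) for \(y\in O\) with \((y)\alpha^{-1}g\in P\) are all correct. Working directly in \(\Sym(\N)\), rather than passing to moieties as the paper does, is fine.

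You flag the last step as the main obstacle yourself, and the zig-zag you sketch does not close it. Your prescriptions are \((y)b_1=(y)b_2=z\), \((y')b_1=z'\) and \((y'')b_2=z'\). These do force \((y')b_1\neq(y')b_2\). But they never place \(y'\) (or \(z'\)) in the \(B\)-orbit of \(y\). The only \(\sim_B\)-links you have created are \(y\sim_B z\) and \(y'\sim_B z'\sim_B y''\). So \cref{equiv-A-inj} yields no contradiction.

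The missing idea, which is the key step in the paper, is to apply the extension property to one-point extensions of \(p\). Take any \(y\in O\) and \(z\in P\) with \((y)\alpha^{-1}\notin\dom(p)\) and \((z)\gamma\notin\im(p)\); only finitely many points are excluded, since \(\alpha\) and \(\gamma\) are bijections of \(O\) and \(P\). Realise \(p\cup\{((y)\alpha^{-1},(z)\gamma)\}\) by some \(g=abc\in S_{\alpha,\gamma}\). Your formula then gives \((y)b=z\). Hence all such points of \(O\) and \(P\) lie in a single \(\sim_B\)-class.

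With this in hand the contradiction is immediate, and no shuttling through \(b^{-1}\) is needed:
\begin{enumerate}
\item Pick two such points \(y\neq y'\) in \(O\) and three distinct such points \(z,z',z''\) in \(P\).
\item Realise \(p\cup\{((y)\alpha^{-1},(z)\gamma),((y')\alpha^{-1},(z')\gamma)\}\) and \(p\cup\{((y)\alpha^{-1},(z)\gamma),((y')\alpha^{-1},(z'')\gamma)\}\).
\item The resulting \(b_1,b_2\in B\) agree at \(y\) but differ at \(y'\sim_B y\), contradicting \cref{equiv-A-inj}.
\end{enumerate}

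The paper finishes differently. From the same observation it deduces that each \(b|_{M_A'}\) is determined by finitely many values. So there are only countably many such restrictions, contradicting the uncountability clause of \cref{prop:topfact}. Once the one-point-extension observation is added, your two-element ending would avoid both the moieties and that uncountability clause. But as written, the argument is incomplete.
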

\begin{proof}
    Suppose for a contradiction that \(x,z\in \N\) are such that \([x]_{\sim_A}\) and \([z]_{\sim_C}\) are infinite.
    Fix a moiety \(M_A\) of \([x]_{\sim_A}\) and a moiety \(M_C\) of \([z]_{\sim_C}\).
    Note that, by \cref{equiv-A-inj}, if \(f \in A'\coloneqq \{a|_{[x]_{\sim_A}} \mid a\in A\}\), then \(f\) is entirely determined by where it sends \(x\), and thus
    \(A'\) is a countable abelian group. By symmetry, \(C' \coloneqq \{c|_{[z]_{\sim_C}} \mid c\in C\}\) is also a countable abelian group.
    
    As \(M_A\) and \(M_C\) are moieties of \(\N\), it follows that every bijection \(f \colon M_A\to M_C\) has an extension \(\bar{f}\in \Sym(\N)\).
    In particular, for all \(f\in \textrm{Bijections}(M_A,M_C)\) there exists \((a_f,b_f,c_f)\in A\times B\times C\) such that
    \[(a_fb_fc_f)|_{M_A} = f.\]
    Let \(a_f' \coloneqq \id_{[x]_{\sim_A}}a_f\in A'\) and \(c_f' \coloneqq c_f \id_{[z]_{\sim_C}}\) and note that
    \(a_f'b_fc_f' = f\) as well (using composition of partial transformations of \(\N\)). In particular, 
    \[\bigcup_{a\in A'}\bigcup_{c\in C'}\makeset{abc}{\(b\in B\) and \((M_A)abc=M_B\)}=\textrm{Bijections}(M_A,M_C).\] 
    By \cref{prop:topfact}, there exist \(a\in A'\) and \(c\in C'\) and a partial bijection \(q\) from \(M_A\) to \(M_C\) such that if 
    \[S \coloneqq \makeset{abc}{\(b\in B\) and \((M_A)abc=M_C\)},\]
    then the following hold:
    \begin{itemize}
        \item \(S\) is uncountable;
        \item \(q\) has finite domain;
        \item if \(q'\) is a partial bijection from \(M_A\) to \(M_C\) with finite domain extending \(q\), then there is an element of \(S\) extending \(q'\).
    \end{itemize}
   Equivalently, if we define \(M_A'\coloneqq M_Aa\) and \(M_C'\coloneqq M_Cc^{-1}\), then there is a partial bijection \(p\) from \(M_A'\) to \(M_C'\) with finite domain such that if \(p'\) is a partial bijection from \(M_A'\) to \(M_C'\) with finite domain extending \(p\), there is an element of \(B\) extending \(p'\).
   Thus for all \(m\in M_A'\setminus \dom(p)\) and \(n\in M_C'\setminus \im(p)\), we have \(m\sim_B n\).
   As \(\sim_B\) is an equivalence relation and \(M_C'\neq \varnothing\), it follows that \(M_A'\) intersects only finitely many classes of \(\sim_B\).
   From \cref{equiv-A-inj}, we now have that the restriction of an element of \(B\) to \(M_{A}'\) is determined by its action on finitely many points. 
   In particular, the set \(\makeset{b|_{M_A'}}{\(b\in B\)}\) is countable.
   This is a contradiction as \(S\subseteq a\makeset{b|_{M_A'}}{\(b\in B\)}c\) and \(S\) was uncountable.
\end{proof}

We now know at least one of \(\sim_A\) and \(\sim_C\) admits only finite equivalence classes. We will leverage a counting argument to obtain contradictions in each of the remaining three cases arising from whether or not \(\sim_A\), \(\sim_C\) or
both of these equivalence relations have only finite equivalence classes.
\begin{lemma}\label{techlem}
    Suppose that \(X,Y\subseteq \N\) are finite and \(\Gamma\) is an abelian subgroup of \(\Sym(\N)\). 
    Then the cardinality of the set \(S_{\Gamma,X,Y}\coloneqq \makeset{\gamma|_X}{\(\gamma \in \Gamma\) and \((X)\gamma \subseteq Y\)}\) is at most \(3^{|Y|}\). 
\end{lemma}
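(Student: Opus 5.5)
The plan is to apply \cref{equiv-A-inj} to \(\Gamma\) itself. Every element of \(\Gamma\) is a fully defined injective map, so \(\Gamma_{\textrm{inj}} = \Gamma\). The relation \(\sim\) is then exactly the orbit relation of \(\Gamma\) on \(\N\), since \(\Gamma\) is closed under inverses. The key consequence is the rigidity already used in the proof of \cref{lem:LBSym1}. If \(\gamma, \delta \in \Gamma\) agree at one point \(x\), then by \cref{equiv-A-inj} they agree at every \(y \sim x\). Hence the restriction of \(\gamma\) to a \(\Gamma\)-orbit \(O\) is determined by the image under \(\gamma\) of any single point of \(O\).

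Next, let \(O_1, \ldots, O_k\) be the distinct \(\Gamma\)-orbits meeting \(X\); there are finitely many because \(X\) is finite. In each \(O_j\) fix a representative \(x_j \in X \cap O_j\). For any \(\gamma \in \Gamma\), the restriction \(\gamma|_X\) is determined by the tuple \(((x_1)\gamma, \ldots, (x_k)\gamma)\), because \(X \subseteq O_1 \cup \cdots \cup O_k\) and \(\gamma|_{O_j}\) is determined by \((x_j)\gamma\).

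Now impose the constraint \((X)\gamma \subseteq Y\). Since \(\gamma\) preserves orbits, \((x_j)\gamma \in O_j \cap Y\). Setting \(n_j \coloneqq |O_j \cap Y|\), we obtain \(|S_{\Gamma,X,Y}| \le \prod_{j=1}^k n_j\). If some \(n_j = 0\), then \(S_{\Gamma,X,Y}\) is empty and the bound is trivial. Otherwise the orbits \(O_j\) are pairwise disjoint, so \(\sum_j n_j \le |Y|\). Using \(n \le 2^n\) for every non-negative integer \(n\), this gives
\[
\prod_{j} n_j \le 2^{\sum_j n_j} \le 2^{|Y|} \le 3^{|Y|}.
\]

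I expect the main (mild) obstacle to be justifying the rigidity step cleanly. One must check that \(\sim\) coincides with the orbit relation, so that \(\gamma\) preserves each \(O_j\). One must also check that agreement at a single point propagates along the entire \(\sim\)-class, and this is exactly what \cref{equiv-A-inj} provides. Everything after that step is elementary counting.
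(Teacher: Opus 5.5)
Your proof is correct. Up to the bound \(|S_{\Gamma,X,Y}|\le\prod_j n_j\) it is the paper's argument: both use \cref{equiv-A-inj} to show that \(\gamma|_X\) is determined by the image of one representative from each \(\Gamma\)-class meeting \(X\), and that this image must lie in the intersection of that class with \(Y\).

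The only difference is the final estimate. The paper rebalances the \(n_j\) until they differ pairwise by at most one, bounds the product by \((2n/k)^k\), and maximises over \(k\) using calculus to reach \(e^{2n/e}\le e^n\le 3^n\). Your inequality \(n_j\le 2^{n_j}\) gives \(\prod_j n_j\le 2^{|Y|}\) in one line, which is more elementary and also gives a sharper bound.
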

\begin{proof}
If \(|X|>|Y|\) then \(S_{\Gamma,X,Y}\) is empty.
So we can assume that \(|X|\leq |Y|<\infty\).
 We first partition \(Y\) via the restriction of the equivalence relation \(\sim_\Gamma\) (see \cref{equiv-A-inj}) to \(Y\). 
    We ignore any elements of \(Y\) not \(\sim_\Gamma\) related to at least one element of \(X\) and let the remaining classes be \(Y_0,Y_1,\ldots, Y_{k-1}\).
    Each element of \(S_{\Gamma,X, Y}\) maps each element of \(X\) to an element of \(Y\) with the same \(\sim_\Gamma\) class. 
    In particular, if there is an element of \(X\) which is not \(\sim_\Gamma\)-related to any element of \(Y\), then \(|S_{\Gamma,X,Y}|=0\) and we are done.
    We can therefore assume that for all \(x\in X\), there exists \(i_x<k\) such that \(Y_{i_x}\) is the set of elements of \(Y\) in the same \(\sim_\Gamma\) class as \(x\). 
    
    If multiple elements of \(X\) have the same \(\sim_\Gamma\)-class, then by \cref{equiv-A-inj}, the image of one element from each class is sufficient to determine an element of \(S_{\Gamma,X,Y}\) .
    It follows that \(|S_{\Gamma,X,Y}|\leq |Y_0||Y_1|\cdots |Y_{k-1}|\).
    Let \(n\coloneqq |Y|\), \(m\coloneqq  |X|\) and for each \(i\), let \(n_i\coloneqq |Y_i|\).
    Note also that \(k\leq m\).
    We have that
    \[|S_{\Gamma,X,Y}|\leq n_0n_1\cdots n_{k-1}.\]
    Given \(n_0,n_1,\ldots, n_{k-1}\), if for example, \(n_0-n_1\geq 2\), then
    \[(n_0+1)(n_1-1)=n_0n_1+n_0-n_1-1\geq n_0n_1+1.\]
    More generally, if any two of \(n_0,n_1,\ldots, n_{k-1}\) differ by at least \(2\), then we can find a new sequence of positive integers \(n_0',n_1',\ldots, n_{k-1}'\) with the same sum but a larger product.
    We can repeat this process until all the resulting numbers \(n_0^\ast,\ldots, n_{k-1}^\ast \) differ pairwise by at most \(1\).
    As \(n_0^\ast +n_1^\ast +\cdots + n_{k-1}^\ast  = n_0+n_1+\cdots + n_{k-1}\), the pigeonhole principle tells us that we cannot have every \(n_i^\ast \) satisfying \(n_i^\ast > \frac{n}{k}\geq \frac{n_0+n_1+\cdots + n_{k-1}}{k} \). 
    As the \(n_i^\ast\) differ by at most \(1\), it follows that all of them are bounded above by \(\frac{n}{k}+1\leq \frac{2n}{k}\). In particular,  
    \[|S_{\Gamma,X,Y}|\leq n_0n_1\cdots n_{k-1}\leq n_0^\ast n_1^\ast \cdots n_{k-1}^\ast \leq \left(\frac{2n}{k}\right)^k.\]

    We define \(f_n \colon \R_{> 0} \to \R_{> 0} \) by
    \[(k)f_n=\left(\frac{2n}{k}\right)^k.\]
    Since the natural logarithm \(\log\) is an order preserving function from \(\R_{>0}\) to
    \(\R\), we can investigate the maximum values of \(\log((k)f_n)\).
    The derivative of \(\log((k)f_n)=k(\log(2n)-\log(k))\) with respect to \(k\) is \(\log(2n)-\log(k)-(k / k)=\log(2n/k)-1\), which evaluates to zero precisely when \(k=\frac{2n}{e}\). Moreover, note that \(f_n\) is clearly decreasing on the domain \([2n,\infty)\). Thus \(f_n\) attains a maximum value at \(k=\frac{2n}{e}\), is increasing before this value and is decreasing after this value.
    In particular,
    \[|S_{\Gamma,X,Y}|\leq \left(\frac{2n}{k}\right)^k\leq e^{\frac{2n}{e}}\leq e^n\leq 3^n.
    \qedhere\]
\end{proof}

We now use \cref{techlem} to show that the case of precisely one of \(\sim_A\) and
\(\sim_B\) admitting only finite equivalence classes does not occur, which will
naturally leave the case when both do.
\begin{lemma}\label{lem:SymNLB2}
  Suppose that \(A,B,C\leq \Sym(\N)\) are abelian groups such that \(ABC=\Sym(\N)\).
Define \(\sim_{A}\) and \(\sim_C\) using \cref{equiv-A-inj}. Then
every \(\sim_A\)-class and every \(\sim_C\)-class is finite.
\end{lemma}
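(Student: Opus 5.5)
By \cref{lem:LBSym1}, at least one of \(\sim_A\), \(\sim_C\) has only finite classes. So I would argue by contradiction: assume exactly one of them has an infinite class, and contradict \cref{techlem} by counting. The two cases are symmetric. Inverting gives \(\Sym(\N) = (ABC)^{-1} = CBA\), and passing to inverses preserves each \(\sim\)-relation, since \(\sim_\Gamma\) is defined as a symmetric closure. So I may assume that every \(\sim_A\)-class is finite and that \(Z \coloneqq [z]_{\sim_C}\) is infinite.

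\textbf{Setting up Baire.} Since every \(\sim_A\)-class is finite and \(\N\) is infinite, there are infinitely many \(\sim_A\)-classes. I choose \(M\) to be a union of infinitely many \(\sim_A\)-classes whose complement is also infinite; then \(M\) is a moiety of \(\N\) that is \(A\)-invariant. Fix a moiety \(M_C\) of \(Z\). As in \cref{lem:LBSym1}, every bijection \(M\to M_C\) extends to an element of \(\Sym(\N)=ABC\). Setting \(C'\coloneqq\{c|_Z \mid c\in C\}\), this group is countable by \cref{equiv-A-inj}, since its elements are determined by the image of \(z\). We therefore get
\[
\operatorname{Bijections}(M,M_C)=\bigcup_{c\in C'} S_c, \qquad S_c\coloneqq\{abc|_{M} \mid a\in A,\ b\in B,\ (M)abc=M_C\}.
\]
\cref{prop:topfact} then gives a fixed \(c\in C'\) and a finite partial bijection \(q\) from \(M\) to \(M_C\) with the following property: every finite partial bijection extending \(q\) is the restriction of some \(abc\) with this same \(c\). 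Composing with \(c^{-1}\), and writing \(N\coloneqq M_Cc^{-1}\), every finite partial bijection \(p'\) from \(M\) to \(N\) extending \(p\coloneqq qc^{-1}\) is a restriction of some product \(ab\) with \(a\in A\), \(b\in B\).

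\textbf{Counting.} Take \(X\subseteq M\setminus\dom(p)\) to be a finite union of \(\sim_A\)-classes with \(|X|=m\). This is possible for arbitrarily large \(m\), since \(M\) contains infinitely many classes and \(\dom(p)\) meets only finitely many. Then choose \(Y\subseteq N\setminus \im(p)\) with \(|Y|=m\). Each of the \(m!\) bijections \(X\to Y\), together with \(p\), is a finite extension of \(p\). Hence each is of the form \((ab)|_X\).

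Because \(X\) is a union of \(\sim_A\)-classes, \(a\) maps \(X\) onto \(X\). So \(a|_X\in S_{A,X,X}\), and then \(b|_X\in S_{B,X,Y}\). By \cref{techlem}, the number of such pairs is at most \(3^m\cdot 3^m=9^m\). This gives \(m!\le 9^m\) for all \(m\) in an unbounded set, which is false for large \(m\). That is the required contradiction.

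\textbf{Main obstacle.} The delicate step is arranging that \(a\) contributes only boundedly many choices. This is why \(M\) and \(X\) must be unions of whole \(\sim_A\)-classes, so that \(a\) preserves \(X\). It is also why \(c\) must be frozen via \cref{prop:topfact} before counting, since otherwise \(c\) could range over infinitely many restrictions. I would also need to check carefully that the passage from \(q\) to \(p\) (composing with the fixed \(c^{-1}\)) preserves the extension property.
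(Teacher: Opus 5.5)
Your proposal is correct and follows essentially the same route as the paper: you use \cref{lem:LBSym1} together with the symmetry \(CBA=\Sym(\N)\) to reduce to finite \(\sim_A\)-classes and an infinite \(\sim_C\)-class, then apply \cref{prop:topfact} to bijections from an \(A\)-invariant moiety into a moiety of that class to fix a single \(c\), and finally count the \(m!\) bijections \(X\to Y\) against \(|S_{A,X,X}||S_{B,X,Y}|\le 9^{m}\) from \cref{techlem}. The only difference is that you spell out the symmetric case and the verification of the \(c^{-1}\) step, both of which the paper leaves implicit.
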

\begin{proof}
    Suppose for a contradiction that \(\sim_C\) has an infinite equivalence
    class (the case that \(\sim_A\) has one is symmetric). By \cref{lem:LBSym1},
    we must have that every \(\sim_A\)-class is finite. Let \(x\in \N\) be fixed
    with an infinite \(\sim_C\) class. As \(\N/\sim_A\) is an infinite family of
    finite subsets of \(\N\), we can fix a moiety \(M_A\) of \(\N\) which is a
    union of \(\sim_A\)-classes. Additionally, fix a moiety \(M_C\) of
    \([x]_{\sim_C}\).

    By \cref{equiv-A-inj}, \(A'\coloneqq \makeset{a|_{M_A}}{\(a\in A\)}\) and \(C'\coloneqq \{c|_{[x]_{\sim_C}} |\ c\in C\}\) are well-defined abelian groups. Moreover, \cref{equiv-A-inj} implies that \(C'\) is countable, as every element in \(C'\) is uniquely determined by its action on \(x\).
    Since \(M_A\) and \(M_C\) are moieties of \(\N\), every bijection \(f \colon M_A\to M_C\) has an extension \(\bar{f}\in \Sym(\N)\).
    It follows that for all \(f\in \textrm{Bijections}(M_A,M_C)\) there exists \((a_f,b_f,c_f)\in A\times B\times C\) such that
    \[(a_fb_fc_f)|_{M_A} = f.\]
    Let \(a_f'\coloneqq \id_{M_A}a_f\in A'\) and \(c_f'\coloneqq  c_f\id_{[x]_{\sim_C}}\) and note that
    \(a_f'b_fc_f' = f\) as well. In particular, 
    \[\bigcup_{c\in C'}\makeset{abc}{\(a\in A'\), \(b\in B\) and \((M_A)abc=M_B\)}=\textrm{Bijections}(M_A,M_C).\]
    By \cref{prop:topfact}, there is \(c\in C'\) and a partial bijection \(q\) from \(M_A\) to \(M_C\) with finite domain, such that for all partial bijections \(q'\) from \(M_A\) to \(M_C\) with finite domain extending \(q\), there is an element of
    \[\makeset{abc}{\(a\in A'\), \(b\in B\) and \(\im(abc)=M_C\)}\]
    extending \(q'\).
    Equivalently, if we define \(M_C'\coloneqq M_Cc^{-1}\) and 
   \[S\coloneqq \makeset{ab}{\(a\in A'\), \(b\in B\) and \(\im(ab)=M_C'\)},\]
   then there exists a partial bijection \(p\) from \(M_A\) to \(M_C'\) with finite domain, such that for all partial bijections \(p'\) from \(M_A\) to \(M_C'\) with finite domain extending \(p\), there is an element of \(S\) extending \(p'\).
 Let \(M_A'\) be an infinite subset of \(M_A\) which is a union of \(\sim_A\)-classes and is disjoint from \(\dom(p)\). Similarly, let \(M_C''\coloneqq  M_C'\backslash \im(p)\). Let \(A''\coloneqq A'|_{M_A'}\) and note that this is a well-defined abelian group, since
 \(M_A'\) is a union of \(\sim_A\)-classes.
   We now have that there are elements of \(A''B\) extending every partial bijection from \(M_{A}'\) to \(M_{C}''\) with finite domain.
   Let \(X\subseteq M_{A}'\) be a finite union of \(\sim_A\)-classes large enough that 
   \begin{equation}\label{eqn:factorial}3^{2|X|}< |X|!.\end{equation}
   Let \(Y\subseteq M_{C}''\) be such that \(|Y|=|X|\).
   As there are elements of \(A''B\) extending every bijection from \(X\subseteq M_{A}'\) to \(Y\subseteq M_{C}''\), it follows that
   \[|\makeset{(a'b')|_{X}}{\(a'\in A''\), \(b'\in B\), and \((X)a'b'=Y\)}|=|X|!.\]
As \(X\) is a union of \(\sim_A\) classes, it follows that
   \[\makeset{(a'b')|_{X}}{\(a'\in A''\), \(b'\in B\), and \((X)a'b'=Y\)} =S_{A,X,X}S_{B,X,Y},\]
   using the notation from \cref{techlem}.
   Thus, by \cref{techlem}, we have
   \[|X|!\leq 3^{|X|}3^{|Y|}=3^{2|X|},\]
   which contradicts Eqn~\eqref{eqn:factorial}. 
\end{proof}

The remaining case within which we need to find a contradiction is the case when
both \(\sim_A\) and \(\sim_C\) admit only finite equivalence classes. We can use
\cref{techlem}, but we require further `counting tools' on top of this to reach
a contradiction. The following relates sizes of finite sets of
\(\sim_A\)-classes to sizes of finite sets of \(\sim_C\)-classes.

\begin{lemma}\label{lem:SymNLB3}
Suppose that \(A,B,C\leq \Sym(\N)\) are abelian groups such that
\(ABC=\Sym(\N)\). We also define \(\sim_{A}\) and \(\sim_C\) using
\cref{equiv-A-inj}. There is a constant \(k\in \N\) such that if
\(X\subseteq \N\) is a union of finitely many \(\sim_A\)-classes, \(Y\subseteq
\N\) is a union of finitely many \(\sim_C\)-classes and \(k\leq |X|\leq |Y|\),
then \(|Y|\geq 2|X|\).
\end{lemma}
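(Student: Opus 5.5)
The approach is a counting argument of the same kind as in the proof of \cref{lem:SymNLB2}. I will choose \(k\) so that \(n! > 3^{5n}\) for all \(n \geq k\); this is possible because \(n!\) eventually dominates any exponential. Suppose \(X\) is a union of finitely many \(\sim_A\)-classes, \(Y\) is a union of finitely many \(\sim_C\)-classes, and \(k \leq |X| \leq |Y| < 2|X|\). I will derive a contradiction, which gives \(|Y| \geq 2|X|\).

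First I record the invariance properties.

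\begin{itemize}
\item Every \(a \in A\) maps each point to a \(\sim_A\)-equivalent point, so \((X)a \subseteq X\). Since \(a\) is injective and \(X\) is finite, in fact \((X)a = X\).
\item Similarly \((Y)c = Y\) for every \(c \in C\), and hence \((Y)c^{-1} = Y\).
\end{itemize}

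Next, let \(f\) be any injection \(X \to Y\). Since \(X, Y\) are finite, \(f\) extends to some \(g \in \Sym(\N)\). Write \(g = abc\) with \(a \in A\), \(b \in B\), \(c \in C\). From \((X)abc \subseteq Y\) and \((Y)c^{-1} = Y\) we get \((X)ab \subseteq Y\). Since \((X)a = X\), this means \((X)b \subseteq Y\). Therefore
\[f = g|_X = (a|_X)(b|_X)(c|_Y).\]
Here \(a|_X \in S_{A,X,X}\), \(b|_X \in S_{B,X,Y}\), and \(c|_Y \in S_{C,Y,Y}\), in the notation of \cref{techlem}.

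So every injection \(X \to Y\) is determined by a triple drawn from \(S_{A,X,X} \times S_{B,X,Y} \times S_{C,Y,Y}\). By \cref{techlem}, the number of injections is therefore at most
\[3^{|X|}\cdot 3^{|Y|}\cdot 3^{|Y|} \leq 3^{|X| + 4|X|} = 3^{5|X|},\]
using \(|Y| < 2|X|\).

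On the other hand, the number of injections \(X \to Y\) is \(|Y|!/(|Y|-|X|)!\). This is at least \(|X|!\), because \(|Y| \geq |X|\) and the product consists of \(|X|\) factors each at least as large as the corresponding factor of \(|X|!\). Combining the two bounds gives \(|X|! \leq 3^{5|X|}\), which contradicts the choice of \(k\) since \(|X| \geq k\).

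The only delicate step is the factorisation \(f = (a|_X)(b|_X)(c|_Y)\) as a composition of partial maps. It needs both \((X)a = X\) and \((X)b \subseteq Y\), so that each restriction lands inside the domain of the next one; this is exactly where the hypotheses that \(X\) is a union of \(\sim_A\)-classes and \(Y\) is a union of \(\sim_C\)-classes are used. Everything after that is the routine estimate above.
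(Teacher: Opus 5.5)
Your proof is correct and follows the paper's argument: you factor every injection \(X\to Y\) through \(S_{A,X,X}S_{B,X,Y}S_{C,Y,Y}\) and bound the size with \cref{techlem}, exactly as the paper does. The difference is only in the final estimate. You assume \(|Y|<2|X|\) and compare \(|X|!\) with \(3^{5|X|}\), whereas the paper compares \(|Y|!/(|Y|-|X|)!\) with \(\lfloor |Y|/2\rfloor^{\lfloor |Y|/2\rfloor}\) to get \(|X|<|Y|/2\) directly. The finiteness of \(X\) and \(Y\) that you use comes from \cref{lem:SymNLB2}.
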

\begin{proof}
   Fix \(k\in \N\) to be large enough such that for all \(n\geq k\) we have \(3^{3n}< \left \lfloor \frac{n}{2}\right \rfloor^{\lfloor \frac{n}{2} \rfloor}\).
   Suppose that \(X\subseteq \N\) is a union of finitely many \(\sim_A\)-classes, \(Y\subseteq \N\) is a union of finitely many \(\sim_C\)-classes and \(k\leq |X|\leq |Y|\).
     Let \(n\coloneqq |Y|\) and \(m\coloneqq |X|\geq k\).
     We need only show that \(n\geq 2m\).
By \cref{techlem}, we have 
    \[|\makeset{a|_X}{\(a\in A\)}| \leq 3^{m},\quad |\makeset{b|_X}{\(b\in B\) and \((X)b\subseteq Y\)}| \leq 3^{n},\quad
    |\makeset{c|_Y}{\(c\in C\)}| \leq 3^n.\]
    As \(ABC=\Sym(\N)\), it follows that every injective map from \(X\) to \(Y\) is the restriction of an element of \(ABC\). As \(X\) is fixed setwise by \(A\) and \(Y\) is fixed setwise by \(C\), it follows that every injective map from \(X\) to \(Y\) is an element of
    \[S_{A,X,X}S_{B,X,Y}S_{C,Y,Y}=\makeset{a|_X}{\(a\in A\)}\makeset{b|_X}{\(b\in B\) and \((X)b\subseteq Y\)}\makeset{c|_Y}{\(c\in C\)}.\]
    Note that  there are \(\frac{n!}{(m-n)!}\) injective functions from \(X\) to \(Y\).
   It follows that
   \[\left \lfloor \frac{n}{2}\right \rfloor^{\lfloor \frac{n}{2} \rfloor}>3^{3n}\geq 3^{m+2n}\geq |S_{A,X,X}||S_{B,X,Y}||S_{C,Y,Y}|\geq \frac{n!}{(n-m)!}=n(n-1)\cdots (n-m+1).\]
   It follows that \(m<\frac{n}{2}\) as required.
\end{proof}

We require one last `counting tool' before we can directly reach our contradiction in
\cref{thm:atleast4}.
\begin{lemma}\label{bigclasses}
Suppose that \(A,B,C\leq \Sym(\N)\) are abelian groups such that \(ABC=\Sym(\N)\).
We also define \(\sim_{A}\) and \(\sim_C\) using \cref{equiv-A-inj}. Then 
\begin{itemize}
    \item every \(\sim_A\) equivalence class is finite and there are only finitely many classes of each finite cardinality;
    \item every \(\sim_C\) equivalence class is finite and there are only finitely many classes of each finite cardinality.
\end{itemize}
\end{lemma}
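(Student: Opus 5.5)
Finiteness of all \(\sim_A\)- and \(\sim_C\)-classes is exactly \cref{lem:SymNLB2}, so what remains is to rule out infinitely many classes of one fixed size. I will treat \(\sim_A\); the \(\sim_C\) case follows by the symmetric argument. The symmetry comes from passing to inverses: \(\Sym(\N)=\Sym(\N)^{-1}=C^{-1}B^{-1}A^{-1}=CBA\), and the relations \(\sim\) are unchanged when a group is replaced by itself. The natural tool is \cref{lem:SymNLB3}, which says that unions of \(\sim_C\)-classes that are at least as large as a big union of \(\sim_A\)-classes must in fact be at least twice as large.

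Suppose, for a contradiction, that there are infinitely many \(\sim_A\)-classes of some fixed size \(s\). Let \(k\) be the constant from \cref{lem:SymNLB3}. The idea is to find a finite union \(X\) of \(\sim_A\)-classes and a finite union \(Y\) of \(\sim_C\)-classes with \(k\le |X|\le |Y|<2|X|\), contradicting that lemma.

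To do this, take \(\sim_A\)-classes \(E_1,E_2,\dots\), each of size \(s\), and set \(X_j=E_1\cup\dots\cup E_j\). Next let \(Y_j\) be the union of all \(\sim_C\)-classes meeting \(X_j\). This is a finite union because every \(\sim_C\)-class is finite, and it contains \(X_j\), so \(|Y_j|\ge |X_j|\).

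The difficulty is that \(Y_j\) may be far larger than \(X_j\), so this choice alone gives no contradiction. I would instead choose \(Y\) first and then build \(X\) inside it.
\begin{itemize}
\item Let \(Y\) be a finite union of \(\sim_C\)-classes that contains many of the \(E_i\) entirely, say \(N\) of them.
\item Take \(X\) to be a union of \(\sim_A\)-classes, all of which lie in \(Y\), with \(|X|\le |Y|\). Then \cref{lem:SymNLB3} forces \(|X|\le |Y|/2\) whenever \(|X|\ge k\).
\item Now reverse the roles. Apply the symmetric version of \cref{lem:SymNLB3} (obtained from \(CBA=\Sym(\N)\)) to \(Y\) and to a union of \(\sim_A\)-classes containing \(Y\). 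This bounds \(|Y|\) from the other side.
\item Iterate between the two lemmas to squeeze the sizes until the required interval \(k\le|X|\le|Y|<2|X|\) is forced.
\end{itemize}

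The main obstacle is this combinatorial step: producing \(X\) and \(Y\) with the sizes within a factor of \(2\). I expect the right move is an averaging argument over the infinitely many size-\(s\) classes \(E_i\), using the bounded class size \(s\) to control how much \(X\) can grow in each step.
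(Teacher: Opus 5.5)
Your reduction is right: finiteness of the classes is \cref{lem:SymNLB2}, symmetry comes from \(CBA=\Sym(\N)\), and the goal is to find a finite union \(X\) of \(\sim_A\)-classes and a finite union \(Y\) of \(\sim_C\)-classes with \(k\le |X|\le |Y|<2|X|\), contradicting \cref{lem:SymNLB3}. However, you never construct such \(X\) and \(Y\). The step you call the ``main obstacle'' is the whole content of the lemma, and the plan you sketch for it does not work. Each application of \cref{lem:SymNLB3}, or of its mirror obtained from \(CBA\), only gives an inequality such as \(|X|\le |Y|/2\) or \(|X'|\ge 2|Y|\). These are consistent by themselves, so ``iterating'' just produces a chain of sets whose sizes keep doubling, not a contradiction. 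The averaging argument is not specified.

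The missing observation is that \cref{lem:SymNLB3} imposes no relation between \(X\) and \(Y\): no containment and no intersection. So there is no reason to take \(Y\) to be the \(\sim_C\)-saturation of \(X\), or to require \(X\subseteq Y\). Choose the two sets independently, and use the infinitely many classes of a fixed size \(s\) to tune one side in steps of \(s\). For your \(\sim_A\) case this goes as follows.
\begin{itemize}
\item Take any finite union \(Y\) of \(\sim_C\)-classes with \(|Y|\ge \max(2s,k+s)\). This is possible because every \(\sim_C\)-class is finite and \(\N\) is infinite.
\item Let \(X\) be the union of \(\lfloor |Y|/s\rfloor\) of the size-\(s\) \(\sim_A\)-classes.
\item Then \(|Y|-s<|X|\le |Y|\). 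Hence \(|X|>k\), and \(2|X|>2|Y|-2s\ge |Y|\).
\end{itemize}
This contradicts \cref{lem:SymNLB3} directly, with no symmetry needed. The paper argues the mirror-image case, infinitely many \(\sim_C\)-classes of size \(q\). It takes any finite union \(X\) of \(\sim_A\)-classes with \(|X|>k+q\), and lets \(Y\) be the union of \(\lceil |X|/q\rceil\) size-\(q\) \(\sim_C\)-classes. Then \(|X|\le |Y|<|X|+q<2|X|\), a contradiction; the \(\sim_A\) case then follows by symmetry.
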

\begin{proof}
Note that \(ABC=\Sym(\N)\) if and only if \(CBA=(ABC)^{-1}=\Sym(\N)^{-1}=\Sym(\N)\).
Thus if one of these conclusions holds then so does the other.
We show only the second conclusion.

Recall from \cref{lem:SymNLB2} that all the classes of \(\sim_A\) and \(\sim_C\) are finite.
Let \(k\in \N\) be a constant as described in
    \cref{lem:SymNLB3}.
    Suppose for a contradiction that there exists \(q\in \N\) such that \(\sim_C\) has infinitely many classes of cardinality \(q\).
    Let \(X\) be a finite union of \(\sim_A\)-classes such that \(|X|> k+q\). 
    Let \(Y\) be the union of any \(\left \lceil\frac{|X|}{q}\right\rceil\) equivalence classes of \(\sim_C\) each with cardinality \(q\).
    In particular,
    \[2|X|> |X|+q=q\left(\frac{|X|}{q} +1\right)\geq q\left \lceil\frac{|X|}{q}\right\rceil=|Y| \textrm{ and } |Y|=q\left \lceil\frac{|X|}{q}\right\rceil\geq |X|.\]
    This contradicts the definition of \(k\) (see \cref{lem:SymNLB3}).
\end{proof}

We can now complete out proof by contradiction that \(\Sym(\N)\) cannot be the product
of three abelian subgroups.

\begin{prop}\label{thm:atleast4}
    \(\cw(\Sym(\N))\geq 4\).
\end{prop}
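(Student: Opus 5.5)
We argue by contradiction: suppose \(A,B,C\leq \Sym(\N)\) are abelian with \(ABC=\Sym(\N)\). By \cref{lem:SymNLB2} and \cref{bigclasses}, every \(\sim_A\)-class and every \(\sim_C\)-class is finite, and each relation has only finitely many classes of any given size. Let \(k\) be the constant from \cref{lem:SymNLB3}. Since \(CBA=\Sym(\N)\) as well, \cref{lem:SymNLB3} also holds with the roles of \(A\) and \(C\) exchanged, after enlarging \(k\) if necessary. So no finite union \(X\) of \(\sim_A\)-classes and finite union \(Y\) of \(\sim_C\)-classes can satisfy \(k\leq \min(|X|,|Y|)\) and \(\max(|X|,|Y|)<2\min(|X|,|Y|)\). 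The aim is to produce two such unions whose sizes are within a factor \(2\) of each other.

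\textbf{Step 1: a gap property.} Let \(n\geq 2k\) be the size of some finite union \(Y\) of \(\sim_C\)-classes. I claim that the \(\sim_A\)-classes of size at most \(n/2\) have total size at most \(n/2\). There are only finitely many such classes. Add them one at a time to a set \(X\) while \(|X|\leq n/2\). Each addition increases \(|X|\) by at most \(n/2\), so if their total exceeded \(n/2\) we would at some stage reach \(n/2<|X|\leq n\), with \(|X|\geq k\). This contradicts \cref{lem:SymNLB3}. The same statement holds with \(A\) and \(C\) swapped.

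\textbf{Step 2: monotone functions.} For \(t\geq 0\), let \(a(t)\) be the total size of the \(\sim_A\)-classes of size at most \(t\), and define \(c(t)\) analogously for \(\sim_C\). Both functions are finite, non-decreasing and unbounded, and \(a(t)\geq t\) whenever \(t\) is the size of some \(\sim_A\)-class. Step~1, applied with \(Y\) the union of all \(\sim_C\)-classes of size at most \(t\), gives \(a(c(t)/2)\leq c(t)/2\) whenever \(c(t)\geq 2k\), and symmetrically \(c(a(s)/2)\leq a(s)/2\).

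\textbf{Step 3 (the main obstacle): chaining these inequalities.} I would fix a large \(\sim_A\)-class size \(s\geq 2k\), so that \(a(s)\geq s\). Then choose \(t\) minimal with \(c(t)\geq 2s\), so that \(c(t)/2\geq s\). Step~2 gives \(s\leq a(s)\leq a(c(t)/2)\leq c(t)/2\), so \(\sim_A\) has no classes with sizes in \((s,c(t)/2]\).

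From here I would take \(X\) to be all \(\sim_A\)-classes of size at most \(s\), so \(|X|=a(s)\). For \(Y\) I would take the \(\sim_C\)-classes of size less than \(t\) together with some of the classes of size exactly \(t\), chosen so that \(|Y|\) lands in \([|X|,2|X|)\) or \([|X|/2,|X|)\). The minimality of \(t\) controls how much \(|Y|\) can jump between consecutive choices, and this should force the forbidden ratio.

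The delicate point is the case where a single \(\sim_C\)-class of size \(t\) is itself bigger than \(2a(s)\). There I would swap the roles of \(A\) and \(C\) and repeat the argument with \(t\) in place of \(s\). Iterating produces an alternating sequence of class sizes that must increase by a factor of at least \(2\) at each step, while the Step~2 inequalities bound each term by the previous one. I expect the contradiction to come from reconciling these two facts, and finding the right bookkeeping for this is the part I am least sure of.
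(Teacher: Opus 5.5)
Your Steps~1 and~2 are correct, but Step~3 cannot be completed by any bookkeeping, because the information you are working from is consistent. Everything you extract from \cref{lem:SymNLB2}, \cref{lem:SymNLB3} (in both directions) and \cref{bigclasses} is a constraint on the sizes of finite unions of $\sim_A$-classes and of $\sim_C$-classes. These constraints can all hold at once.

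For instance, partition $\N$ into $\sim_A$-classes of sizes $100^j$ and, independently, into $\sim_C$-classes of sizes $10\cdot 100^j$ ($j\geq 1$, one class of each size). A finite union of $\sim_A$-classes whose largest class has size $100^J$ has size in $[100^J,\tfrac{100}{99}\cdot 100^J]$. A finite union of $\sim_C$-classes whose largest class has size $10\cdot 100^{J'}$ has size in $[10\cdot 100^{J'},\tfrac{1000}{99}\cdot 100^{J'}]$. Splitting into the cases $J'\geq J$ and $J'<J$ shows that any two such sizes differ by a factor of at least $9.9$.

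Thus every conclusion of those lemmas holds (with $k=1$), and hence so do your Steps~1 and~2. Yet no pair $X,Y$ with sizes within a factor $2$ of each other exists. Your alternating sequence here is just $100^j,10\cdot 100^j,100^{j+1},\dots$, which grows geometrically with no tension against the Step~2 inequalities. Strengthening the factor $2$ does not help either: the counting behind \cref{lem:SymNLB3} can be pushed to give ratios tending to infinity, but interleaved super-exponentially growing sizes still satisfy that. So no argument using only the class sizes of $\sim_A$ and $\sim_C$ can reach a contradiction; you have to use $B$.

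This is what the paper does. All $\sim_A$- and $\sim_C$-classes are finite but $(0)ABC=\N$, so some $\sim_B$-class $[x]_{\sim_B}$ must be infinite (otherwise $(0)ABC$ would lie in finitely many finite classes). Choose $y\in[x]_{\sim_B}\setminus[x]_{\sim_A}$ and an injection $p\colon[x]_{\sim_A}\to[s]_{\sim_C}$. If $abc$ extends $p$, then $b|_{[x]_{\sim_A}}=(a^{-1}pc^{-1})|_{[x]_{\sim_A}}$ is one of the finitely many injections $[x]_{\sim_A}\to[s]_{\sim_C}$. By \cref{equiv-A-inj}, the point $(y)b$ therefore lies in a finite set $T$ that does not depend on the factorisation.

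Using \cref{bigclasses}, pick a $\sim_C$-class $[t]_{\sim_C}$ with $|[t]_{\sim_C}|\geq|[y]_{\sim_A}|$, disjoint from $[s]_{\sim_C}$ and from every $\sim_C$-class meeting $T$. Then pick a permutation $f$ extending $p$ with $([y]_{\sim_A})f\subseteq[t]_{\sim_C}$. Writing $f=abc$ gives $(y)b\sim_C(y)bc=((y)a^{-1})f\in[t]_{\sim_C}$, contradicting $(y)b\in T$.
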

\begin{proof}
Suppose for a contradiction that \(A,B,C\leq \Sym(\N)\) are abelian with \(ABC=\Sym(\N)\). Define \(\sim_A\), \(\sim_B\) and \(\sim_C\) using \cref{equiv-A-inj}.
  Note that \cref{bigclasses} implies that both \(\sim_A\) and \(\sim_C\) have only finite equivalence classes, and we can find equivalence classes of each relation with arbitrarily large finite cardinalities.
  However, as \((0)ABC=\N\), we must have that there is an infinite \(\sim_B\)-class.
Let \(x\in \N\) be such that \([x]_{\sim_B}\) is infinite. It follows that we can fix some \(y\in [x]_{\sim_B}\backslash [x]_{\sim_A}\).
Let \(s\in \N\) be such that  \(|[s]_{\sim_C}|\geq |[x]_{\sim_A}|\).
Let \(p \colon [x]_{\sim_A} \to [s]_{\sim_C}\) be a fixed injective map.
Note that if \(abc\in ABC\) extends \(p\), then \(b|_{[x]_{\sim_A}}=(a^{-1}pc^{-1})|_{[x]_{\sim_A}}\) is an injective map from
\([x]_{\sim_A}\) to \([y]_{\sim_C}\).
In particular, the set 
\[\makeset{b|_{[x]_{\sim_A}}}{there exist \(a\in A\) and \(c\in C\) with \(p= (abc)|_{[x]_{\sim_A}}\)}\]
is a finite set of injective maps from \([x]_{\sim_A}\) to \([s]_{\sim_C}\).
If \(b\in B\), then \cref{equiv-A-inj} tells us that \(b|_{[x]_{\sim_B}}\) is determined by \(b|_{[x]_{\sim_A}}\). It follows that the set
\[\makeset{b|_{[x]_{\sim_B}}}{there exist \(a\in A\) and \(c\in C\) with \(p= (abc)|_{[x]_{\sim_A}}\)}\]
is also finite. Thus, the set \[T\coloneqq \makeset{(y)b}{there exist \(a\in A\) and \(c\in C\) with \(p= (abc)|_{[x]_{\sim_A}}\)}\]
is finite. Let \(\tilde{T}\) be the set of elements of \(\N\) which are \(\sim_C\)~related to elements of \(T\). In particular, \(\tilde{T}\) is finite as well.
Let \(t\in \N\setminus (\tilde{T} \cup [s]_{\sim_C})\) be such that \(|[t]_{\sim_C}|\geq |[y]_{\sim_A}|\).
In particular, we can find a permutation \(f\in \Sym(\N)\) which extends \(p\) and such that \(([y]_{\sim_A})f\subseteq [t]_{\sim_C}\).
By assumption, there exists \(abc \in ABC\) such that \(abc=f\).
From the definition of \(T\), we must have \((y)b\in T\) and so \((y)b \notin [t]_{\sim_C}\).
However,
\[(y)b\sim_C (y)bc=(ya^{-1})abc=((y)a^{-1})f\in ([y]_{\sim_A})f\subseteq [t]_{\sim_C}.\]
  This is a contradiction.
\end{proof}

The following lemma is essentially the same as \cite{abelianprod}*{Lemma 3}, however formulated in more generality so that we can use it for different moieties in later arguments.
We include a proof for completeness.

\begin{lemma}\label{full diagonal}
Suppose that \(X\) is an infinite set and \(D\) is a moiety of \(X\).
There are abelian subgroups \(H_D,V_D\leq \operatorname{Sym}(X)\) such that for all \(f\in \operatorname{Sym}(D)\), there exist \(h_f\in H_D\) and \(v_f\in V_D\) with \( (h_fv_f)|_D=f\).
\end{lemma}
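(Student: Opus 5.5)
The plan is to reduce to a concrete model in which \(D\) is the diagonal of a square grid. Then \(H_D\) and \(V_D\) will be the groups of ``row-wise translations'' and ``column-wise translations'' with respect to an abelian group structure on the side of the square.

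First, the reduction. Let \(\kappa = |X|\) and fix a set \(\Omega\) with \(|\Omega| = \kappa\), together with an abelian group structure \((\Omega,+)\). For example, take \(\Omega\) to be the direct sum \(\bigoplus_{\kappa} \Z/2\Z\), which has cardinality \(\kappa\) since \(\kappa\) is infinite. Since \(|\Omega\times\Omega| = \kappa\), the diagonal \(\Delta = \{(\omega,\omega) \mid \omega\in\Omega\}\) has \(|\Delta| = \kappa\), and \(\Omega^2\setminus\Delta\) has cardinality \(\kappa\) as well. So \(\Delta\) is a moiety of \(\Omega\times\Omega\). Any two moieties of sets of the same cardinality are carried to one another by a bijection, because we can biject the moieties and their complements separately. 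So there is a bijection \(\theta\colon X\to\Omega\times\Omega\) with \((D)\theta = \Delta\). Conjugation by \(\theta\) is an isomorphism \(\Sym(X)\to\Sym(\Omega\times\Omega)\) that respects restriction to \(D\) versus \(\Delta\). Hence it suffices to prove the statement for \(X=\Omega\times\Omega\) and \(D=\Delta\).

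Next, the two groups. For each function \(\varphi\colon\Omega\to\Omega\), let \(h_\varphi\) be the map \((x,y)\mapsto (x+\varphi(y),\,y)\). Each \(h_\varphi\) is a permutation of \(\Omega\times\Omega\), with inverse \(h_{-\varphi}\). We also have \(h_\varphi h_{\varphi'} = h_{\varphi+\varphi'}\). So \(H_D\coloneqq\{h_\varphi\}\) is a subgroup isomorphic to the abelian group \(\Omega^\Omega\). Symmetrically, for each \(\psi\colon\Omega\to\Omega\) let \(v_\psi\) be the map \((x,y)\mapsto(x,\,y+\psi(x))\), and set \(V_D\coloneqq\{v_\psi\}\), which is again abelian.

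Finally, the factorisation. Given \(f\in\Sym(\Delta)\), write \((\omega,\omega)f = (\sigma(\omega),\sigma(\omega))\) with \(\sigma\in\Sym(\Omega)\). Put \(\varphi(\omega)\coloneqq\sigma(\omega)-\omega\), so that \(h_\varphi\) sends \((\omega,\omega)\) to \((\sigma(\omega),\omega)\). Then define \(\psi(\sigma(\omega))\coloneqq \sigma(\omega)-\omega\). This is well defined on all of \(\Omega\) precisely because \(\sigma\) is a bijection. With this choice, \(v_\psi\) sends \((\sigma(\omega),\omega)\) to \((\sigma(\omega),\sigma(\omega))\). Hence \((h_\varphi v_\psi)|_\Delta = f\).

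The only step needing care is the reduction: we need the existence of an abelian group structure of size \(\kappa\) and the fact that the diagonal is a moiety. Both are standard cardinal arithmetic for infinite \(\kappa\), so I do not expect a real obstacle.
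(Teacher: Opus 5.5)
Your proposal is correct and is essentially the paper's own proof. Both use the model \(X=A\times A\) with \(D\) the diagonal, the same two abelian groups translating one coordinate by an amount depending on the other, and the same choices: your \(\varphi(\omega)=\sigma(\omega)-\omega\) and \(\psi(z)=z-\sigma^{-1}(z)\) are the paper's \(h_a=(a)f'-a\) and \(v_a=a-(a){f'}^{-1}\). The only difference is that you justify the ``without loss of generality'' reduction (an abelian group of cardinality \(|X|\), the diagonal being a moiety, and transport by a bijection), which the paper leaves implicit.
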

\begin{proof}
Let \(A\) be an abelian group with \(|A|=|X|\). 
We can assume without loss of generality that \(X=A\times A\) and \(D=\makeset{(a,a)}{\(a\in A\)}\). Let \[V_D\coloneqq \makeset{v\in \operatorname{Sym}(A\times A)}{for all \(a\in A\) there exists \(v_a\in A\) such that\\ for all \(b\in A\) we have \((a,b)v=(a,b+v_a)\)},\]
\[H_D\coloneqq \makeset{h\in \operatorname{Sym}(A\times A)}{for all \(a\in A\) there exists \(h_a\in A\) such that\\ for all \(b\in A\) we have \((b,a)h=(b+h_a,a)\)}.\]
For all \((a\mapsto v_a)\in A^A\), we have a unique corresponding element of \(V_D\).
From this we can see that \(V_D\) is isomorphic to the abelian group \( (A^A,+)\).
The group \(H_D\) is also isomorphic to \((A^A, +)\) analogously.
Let \(f\in \operatorname{Sym}(D)\) be arbitrary and let \(f'\in \operatorname{Sym}(A)\) be such that \((a,a)f=((a)f',(a)f')\) for all \(a\in A\). 
For all \(a\in A\), let \(h_a\coloneqq (a)f'-a\) and \(v_a\coloneqq a-(a){f'}^{-1}\). We then define \(h\in H_D\) and \(v\in  V_D\) by
\[(b,a)h=(b+h_a,a)\quad \text{and}\quad (a,b)v=(a,b+v_a).\]
In particular, for all \((a,a)\in D\), we have
\begin{align*}
    (a,a)hv&= (a+h_a,a)v\\
    &= (a+(a)f'-a,a)v\\
    &= ((a)f',a)v\\
    &=((a)f',a+v_{(a)f'})\\
    &=((a)f',a+(a)f'-(a)f'{f'}^{-1})\\
    &=((a)f',(a)f')\\
    &=(a,a)f. \qedhere
\end{align*}
\end{proof}

If \(X\) is a set and \(S\subseteq X\), then we write \(\operatorname{Sym}_X(S)\) for the group of permutations of \(X\) which are supported on \(S\); that is,
\(\{f \in \Sym(X) \mid (x)f = x \text{ for all } x \in X \setminus S\}\). For an
infinite set \(X\), it is immediate that \(\operatorname{Sym}_X(S)\cong \operatorname{Sym}(S)\).

\cref{full diagonal} is very powerful as it, in some sense, gives us a copy of
the entire symmetric group using only two abelian groups. However, it has the
disadvantage that it doesn't give a clear idea what is happening to the points
not in \(D\). The following lemma rectifies this issue at the cost of three
additional abelian groups.

\begin{lemma}\label{moietysupp}
Suppose that \(X\) is an infinite set and \(D\) is a moiety of \(X\). Let \(\operatorname{Sym}_X(D)\) be the group of permutations of \(X\) supported on \(D\). Then there exist abelian subgroups \(A_0,A_1,A_2,A_3,A_4\leq \operatorname{Sym}(X)\) such that
\[\operatorname{Sym}_X(D)\subseteq A_0A_1A_2A_3A_4.\]
\end{lemma}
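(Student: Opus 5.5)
The plan is to use \cref{full diagonal} for the main work on \(D\), and then spend the remaining three abelian subgroups cancelling the uncontrolled action that \(H_D V_D\) has off \(D\). Partition \(X \setminus D\) into two moieties \(E\) and \(F\) of \(X\), and put \(Y \coloneqq D \cup E\), so that \(D\) is a moiety of \(Y\).

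\textbf{Step 1 (two groups on \(D\)).} Apply \cref{full diagonal} inside \(\operatorname{Sym}(Y)\), extend every element by the identity on \(F\), and take \(A_0 \coloneqq H_D\) and \(A_1 \coloneqq V_D\); both are abelian subgroups of \(\operatorname{Sym}_X(Y)\). For \(g \in \operatorname{Sym}_X(D)\), the lemma applied to \(g|_D\) gives \(u \coloneqq h v \in A_0A_1\) with \(u|_D = g|_D\). Since \((D)u = D\), the element \(w \coloneqq u^{-1}g\) fixes \(D\) pointwise. It also fixes \(F\) pointwise, so \(w \in \operatorname{Sym}_X(E)\). Hence \(g = hvw\), and it remains to show that every \(w\) arising in this way lies in \(A_2A_3A_4\) for three fixed abelian groups.

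\textbf{Step 2 (the correction).} The naive idea of applying \cref{full diagonal} again to \(E\) inside \(E \cup F\) just pushes the garbage onto \(F\), and this regresses. So the correction must exploit the freedom in Step 1. The choices of \(h_a, v_a\) in the proof of \cref{full diagonal} are only forced on the diagonal. Writing \(Y = A \times A\), off-diagonal points move by \((b,a) \mapsto (b + h_a,\, a + v_{b+h_a})\). I would first try to show that the garbage \(w\) can always be arranged, using this freedom, to lie in a product \(K_0K_1\) of two fixed abelian subgroups of \(\operatorname{Sym}_X(E)\), for example groups of coordinatewise translations on a fixed product structure of \(E\). The auxiliary moiety \(F\) would then be used to supply one more abelian group \(A_4\) that conjugates or shifts the leftover into the required shape. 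This would give \(A_2 = K_0\), \(A_3 = K_1\), and \(A_4\).

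\textbf{Fallback and main obstacle.} If that fails, I would split \(g\) differently. Write \(g\) as a product of permutations of \(D\), and arrange that the garbage parts of the factors cancel because they lie in a common abelian group (such as an infinite direct product of finite cyclic groups acting on \(E\)). Then reuse \(A_0, A_1\) only once and absorb the cancellation into \(A_2A_3A_4\). The main obstacle throughout is exactly this garbage term on \(X \setminus D\): controlling it with only three abelian subgroups, given that Theorem A forbids representing all of \(\operatorname{Sym}(E)\) with so few. The proof must therefore rely on the special form of \(w\), not on a general decomposition of \(\operatorname{Sym}(E)\).
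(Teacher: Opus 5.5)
Your Step 1 is correct: since \(u=hv\) agrees with \(g\) on \(D\) and preserves \(Y\), the element \(w=u^{-1}g\) lies in \(\Sym_X(E)\), satisfies \(w|_E=(u|_E)^{-1}\), and \(g=hvw\). But all the content of the lemma then sits in Step 2, and there you only say what you would try. Nothing shows that the permutations \(w\) arising this way lie in a product of three fixed abelian groups, and the fallback is too vague to assess. The mechanism you propose for Step 2 also rests on a false premise. With \(H_D,V_D\) as built in the proof of \cref{full diagonal}, there is no freedom in choosing \(h,v\). Let \(f'\in\Sym(A)\) correspond to \(g|_D\). Each \(h_a\) is pinned by the diagonal point \((a,a)\), which forces \(h_a=(a)f'-a\). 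Each \(v_c\) is pinned by the unique diagonal point that \(h\) moves into the column \(\{c\}\times A\), which forces \(v_c=c-(c){f'}^{-1}\). So \(w\) is completely determined by \(g\). You are left needing three abelian groups whose product contains this uncontrolled family of permutations of \(E\), and that is exactly where the proposal stalls.

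The missing idea is to use \(A_0A_1\) a second time, as a conjugator, so that the garbage off \(D\) cancels by itself instead of having to be absorbed.
\begin{enumerate}
\item Take a moiety \(D'\) of \(X\setminus D\) and apply \cref{full diagonal} to the moiety \(D\cup D'\) of \(X\). This gives abelian \(A_0,A_1\) such that every \(\sigma\in\Sym(D\cup D')\) extends to some \(u\in A_0A_1\).
\item Partition \(D\cup D'\) into \(|X|\) blocks of each size \(i\in\{1,2,3,\ldots\}\cup\{\aleph_0\}\). Let \(A_2\) be the full product of cyclic groups of order \(i\) (respectively \(\Z\)) acting regularly on these blocks. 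Then \(A_2\) is abelian, supported on \(D\cup D'\), and realises every cycle type of a permutation of \(D\cup D'\) with \(|X|\) fixed points.
\item For \(g\in\Sym_X(D)\), the restriction \(g|_{D\cup D'}\) fixes all of \(D'\); this is what \(D'\) is for. Hence \(g|_{D\cup D'}=\sigma^{-1}a\sigma\) for some \(a\in A_2\) and \(\sigma\in\Sym(D\cup D')\).
\item Extend \(\sigma\) to \(u\in A_0A_1\). The conjugate \(u^{-1}au\) agrees with \(g\) on \(D\cup D'\) and is the identity off \(D\cup D'\), because \(u\) preserves \(D\cup D'\) setwise and \(a\) is supported there.
\end{enumerate}
Hence \(g\in A_1A_0A_2A_0A_1\). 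The three `correcting' factors are \(A_2\) together with \(A_0\) and \(A_1\) reused, not three new groups absorbing an arbitrary \(w\).
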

\begin{proof}
    Let \(D'\) be a moiety of \(X\setminus D\).
    By \cref{full diagonal}, there are abelian subgroups \(A_0,A_1\leq \operatorname{Sym}(X)\) such that every permutation of \(D\cup D'\) has an extension in \(A_0A_1\). 
    Note that \[|D\cup D'|=\sum_{i\in \N\cup \{\aleph_0\}} i|X|.\]
    In particular, we can partition \(D\cup D'\) into \(|X|\) copies of each countable cardinal.
    If we define \(A_3\) to act as the cyclic group of order \(i\) on each of these sets, then \(A_2\) is an abelian group and contains elements with every possible disjoint cycle structure which has \(|X|\) fixed points.
    Let \(A_2\leq \operatorname{Sym}(X)\) be abelian, supported on \(D\cup D'\) and such that every disjoint cycle structure with infinitely many fixed points in \(D\cup D'\) is realised by \(A_2\).
   It follows that for all \(g\in \operatorname{Sym}_X(D)\) there exist \(h_g\in A_0\) and  \(v_g\in A_1\) such that \(g\in (h_gv_g)^{-1}A_2h_gv_g\). In particular,
    \[\operatorname{Sym}_X(D)\subseteq A_1A_0A_2A_0A_1,\]
    as required.
\end{proof}

We next introduce the last technical lemma needed to complete the proof of the upper bound from Theorem A. 
This is the only step in the proof which requires any assumptions on the cardinality of \(X\) (other than the assumption of being infinite).
Recall that an infinite cardinal \(\kappa\) is called \emph{regular} if it cannot be expressed as a sum of less than \(\kappa\) cardinals strictly below \(\kappa\). 
For example, \(\aleph_0\) is regular, since a union of finitely many finite sets is finite and every infinite successor cardinal is regular since \(\aleph_\alpha\cdot \aleph_\alpha =\aleph_\alpha<\aleph_{\alpha+1}\) for all ordinals \(\alpha\).
On the other hand, \(\aleph_{\omega}\) is not regular since
\(\aleph_{\omega}=\sum_{i\in \N}\aleph_i.\)
\begin{lemma}\label{lem:transfinite}
    Suppose that \(X\) is an infinite set, \(D\) is a moiety of \(X\), and \(|X|\) is a regular cardinal.
    There are abelian groups \(C,H\leq \Sym(X)\) such that for all moieties
    \(M\) of \(X\), there exist \(c_{M}\in C\) and \(h_{M}\in H\) with
    \[(M)c_{M}h_{M}\cap D=\varnothing.\]
\end{lemma}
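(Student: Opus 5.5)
The plan is to identify \(X\) with a product so that \(H\) acts by independent translations along ``rows'' and \(C\) acts by independent translations along ``columns''. The job of \(C\) is to move \(M\) so that in every row it misses a whole coset of a fixed subgroup; \(H\) then slides that coset onto \(D\).

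\textbf{Coordinates.} Let \(\kappa=|X|\) and fix an abelian group \(G\) with \(|G|=\kappa\), for instance \(\bigoplus_{\kappa}\Z/2\). Fix a subgroup \(K\leq G\) with \(|K|=\kappa\) and \([G:K]=\kappa\). Since \(D\) is a moiety, we may identify \(X\) with \(G\times G\) by a bijection carrying \(D\) onto
\[
D_0=\makeset{(g,x)}{\(g\in G\), \(x\in K\)} .
\]
Any bijection of this kind simply conjugates the groups we build, so this loses nothing. Call \(\{g\}\times G\) the \(g\)-th row and \(G\times\{x\}\) the \(x\)-th column.

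\textbf{The group \(H\).} For each function \(\eta\colon G\to G\), let \(h_\eta\colon (g,x)\mapsto (g,x+\eta_g)\), and set \(H=\{h_\eta\}\cong (G^G,+)\). This is abelian, exactly as \(V_D\) in \cref{full diagonal}. Write \(S_g\) for the set of points of \(Mc\) lying in the \(g\)-th row. Then
\[
(Mc)h_\eta\cap D_0=\varnothing \iff \text{for every } g,\ \eta_g\notin K-S_g ,
\]
that is, iff the coset \(-\eta_g+K\) is disjoint from \(S_g\). So the whole task reduces to one requirement: there is \(c\in C\) such that, in every row, \(Mc\) misses at least one full coset of \(K\). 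One sufficient way to get this is to make every row meet \(Mc\) in fewer than \(\kappa\) points. Since \([G:K]=\kappa\), a set of fewer than \(\kappa\) points meets fewer than \(\kappa\) cosets of \(K\), so some coset is missed.

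\textbf{The group \(C\).} Take the column analogue: for each \(\gamma\colon G\to G\), let \(c_\gamma\colon (g,x)\mapsto (g+\gamma_x,x)\), giving \(C\cong(G^G,+)\), again abelian. Let \(N=X\setminus M\), which has size \(\kappa\). Enumerate the rows as \((r_\alpha)_{\alpha<\kappa}\) and the cosets of \(K\) as \((k_\alpha)_{\alpha<\kappa}\). We then build \(\gamma\) by transfinite recursion of length \(\kappa\). At stage \(\alpha\) we want to commit shifts \(\gamma_x\) on a set \(Q_\alpha\) of fresh columns, forming a coset of \(K\), so that the \(r_\alpha\)-th row of \(Mc\) contains no point of \(M\) in those columns. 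Concretely, for each \(x\in Q_\alpha\) the shift \(\gamma_x\) must send some point \((g,x)\in N\) to row \(r_\alpha\), which needs \(\gamma_x=r_\alpha-g\). Such a column then only needs to miss \(M\) in that one row, which is automatic once \((g,x)\in N\).

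\textbf{Main obstacle.} The problem is that a single column \(x\) receives only one shift \(\gamma_x\), so it can serve only one row. Hence we need \(\kappa\) many pairwise disjoint cosets of columns, each coset consisting of columns that meet \(N\). This fails if \(N\) is concentrated in few columns.

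To repair this I would first use the freedom in choosing the identification of \(X\) with \(G\times G\). Alternatively, I would apply the recursion along both coordinates, using \(G\times G\times G\) with \(C\) translating one coordinate and \(H\) another. The aim is to arrange that \(N\) meets \(\kappa\) columns of every coset type. Regularity of \(\kappa\) should enter here through the bookkeeping. At each stage \(\alpha<\kappa\) only \(|\alpha|\cdot\) (fewer than \(\kappa\)) columns have been used. By regularity, the union of fewer than \(\kappa\) such sets of size less than \(\kappa\) still has size less than \(\kappa\), so fresh columns carrying points of \(N\) remain available in every coset we still need.

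I expect the genuinely hard part to be this recursion: showing that an arbitrary complement \(N\) of size \(\kappa\) can, after one column-wise translation, supply a full \(K\)-coset of ``free'' positions in every row simultaneously.
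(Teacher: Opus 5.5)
There is a genuine gap, and it is not a bookkeeping issue. The groups $C$ and $H$ you fix cannot work, so your recursion is aiming at a false statement.

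Since $C$ and $H$ must be chosen before $M$, take $M=X\setminus(G\times\{x_0\})$ for a single column $x_0$; this is a moiety. Every $c_\gamma\in C$ maps the column $G\times\{x_0\}$ onto itself. Every $h_\eta\in H$ preserves rows, so $(X\setminus M)c_\gamma h_\eta=\{(g,x_0+\eta_g) : g\in G\}$ has exactly one point in each row. By contrast, $D_0=G\times K$ has $|K|=\kappa$ points in each row. Hence $(M)c_\gamma h_\eta\cap D_0\neq\varnothing$ for every $c_\gamma\in C$ and $h_\eta\in H$.

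This is your ``main obstacle'', and it cannot be repaired within your setup. Re-choosing the identification $X\cong G\times G$ after seeing $M$ violates the quantifier order. No counting with regularity can help either, since $N$ may lie entirely in one column. The $G\times G\times G$ alternative is not specified enough to evaluate. A minor point: a column meeting $N$ in many points does serve many rows, not just one. But it serves them only along a translate of a fixed set, which is the same rigidity.

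The missing idea is to let $C$ act on pieces of size \emph{less than} $\kappa$, so that regularity forces $X\setminus M$ to meet $\kappa$ many of them. The paper proceeds as follows.
\begin{itemize}
\item It writes $X=D'\times A$ with $D=D'\times\{0_A\}$. Now $D$ has only one point per row, so your row-translation group $H$ needs just one point of $X\setminus M$ in each row.
\item It enumerates $D'=\{d_\beta\}$ and $A=\{a_\beta\}$, and partitions $X$ into blocks $B_\beta=\{(d_j,a_k) : \max(j,k)=\beta\}$, each of size $<\kappa$. The group $C$ is the product of independent abelian group structures on the blocks.
\item By regularity, the set $I$ of $\beta$ with $B_\beta\not\subseteq M$ has size $\kappa$. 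The order isomorphism $\phi\colon\kappa\to I$ satisfies $(\beta)\phi\geq\beta$, so $(d_\beta,a_{(\beta)\phi})\in B_{(\beta)\phi}$.
\item For each row $d_\beta$, $c_M$ translates some point of $B_{(\beta)\phi}\setminus M$ onto $(d_\beta,a_{(\beta)\phi})$. Then $h_M$ shifts row $d_\beta$ by $-a_{(\beta)\phi}$.
\end{itemize}
This gives $D\subseteq(X\setminus M)c_Mh_M$, as required.
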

\begin{proof}
    Without loss of generality, we can assume that there is an abelian group \(A\) with identity \(0_A\) and a set \(D'\) such that 
    \begin{itemize}
        \item \(|A|=|D'|=|X|\);
        \item \(X=D'\times A\);
        \item \(D=D'\times \{0_A\}\).
    \end{itemize}
    For all \(f\in A^{D'}\), let \(h_f \colon X\to X\) be defined by
    \((d,a)h_f=(d,a+(d)f).\)
    Let \(H\) be the group of all permutations \(h_f\) for  \(f\in A^{D'}\). 
    In particular \((H, \circ) \cong (A^{D'},+)\)  is abelian.
Fix enumerations 
\(\makeset{a_\beta}{\(\beta\) is an ordinal below \(|X|\)}\)
and \(\makeset{d_\beta}{\(\beta\) is an ordinal below \(|X|\)}\) of \(A\) and \(D'\), respectively.
For all \(\beta<|X|\), define
\[B_\beta \coloneqq \makeset{(d_j,a_k)\in X}{\(\max(j, k)=\beta\)}.\]
Note that each of the sets \(B_\beta\) is non-empty and has cardinality at most \(|\beta+1||\beta+1|<|X|\), and also these sets partition \(X\).
On each of the sets \(B_\beta\), we define an abelian group operation \(+_\beta\).
For all \(f=(f_{\beta})_{\beta<|X|}\in \prod_{\beta<|X|} (B_\beta,+_{\beta})\), we define a permutation 
\(c_f \colon X\to X\) by
\((x)c_f=x+_{\beta} f_{\beta}\) whenever \(x\in B_\beta\).
Let \(C\) denote the set of all such maps \(c_f\) and note that \(C\) is isomorphic to the abelian group \(\prod_{\beta<|X|} (B_\beta,+_{\beta})\).
We show that \(C\) and \(H\) have the required property.

Let \(M\) be a moiety of \(X\). We will show that we can find \(c_Mh_M\in CH\) such that \((M)c_Mh_M\cap D=\varnothing\).
Let \(I\) denote the set of ordinals \(\beta<|X|\) such that \(B_\beta\not \subseteq M\). In particular, 
\[|X|=|X\backslash M| \leq |\cup_{\beta\in I} B_\beta|\leq \sum_{\beta\in I} |B_\beta|.\]
As \(|X|\) is a regular cardinal and each \(B_\beta\) has cardinality strictly below \(|X|\), it follows that \(|I|=|X|\).
Note that \(|X|\) and \(I\) are both well-ordered sets of cardinality \(|X|\), such that no proper downwards closed subset has cardinality \(|X|\).
As such, we can build a unique order isomorphism \(\phi \colon |X|\to I\) by transfinite induction. Moreover, \(\phi\) satisfies \((\beta)\phi\geq \beta\) for all \(\beta<|X|\). In particular, \((d_\beta, a_{(\beta)\phi})\in B_{(\beta)\phi}\) for all \(\beta<|X|\).
For all \(\beta<|X|\), we fix a choice of \(c_\beta\in B_{(\beta)\phi}\) such that \(c_\beta \notin M\), which exists since \((\beta)\phi\in I\), and so \(B_{(\beta)\phi}\not \subseteq M\).
We then define \(c_M\in C\) to be the element which fixes all elements of the sets \(B_\beta\) when \(\beta\notin I\) and such that
\[(x)c_M= 
    x+_{(\beta)\phi} (d_\beta, a_{(\beta)\phi}) -_{(\beta)\phi} c_{\beta}, \]
    whenever \(x\in B_{(\beta)\phi}\).
    In particular, for all \(\beta<|X|\), we have \[(c_\beta)c_M=c_{\beta}+_{(\beta)\phi} (d_\beta, a_{(\beta)\phi}) -_{(\beta)\phi} c_{\beta}=(d_\beta,a_{(\beta)\phi}).\]
    Let \(h_M\in H\) be the map \((d_\beta, a_\beta) \mapsto (d_\beta, a_\beta-a_{(\beta)\phi})\). Then, for all \(\beta <|X|\), we have
    \[(c_\beta)c_Mh_M=(d_\beta,a_{(\beta)\phi})h_M=(d_\beta, 0_A)\in D.\]
    As the elements \(c_{\beta}\) for \(\beta<|X|\) do not belong to \(M\), it follows that \(D\subseteq (X\setminus M)c_Mh_M\).
    In other words, \((M)c_Mh_M \cap D=\varnothing\), as required.
\end{proof}

\cref{lem:transfinite} allows us to use two abelian groups to move an arbitrary moiety into a fixed moiety. We can then apply \cref{full diagonal} to use another two abelian groups to  move the result to the fixed moiety in any way we wish, and \cref{moietysupp}
allows us to use five abelian groups to permute the remaining points in any way we wish.

\begin{thm}\label{thm:symupper}
    \(\operatorname{cgw}(\operatorname{Sym}(X))\leq 9\) for all infinite sets \(X\) such that \(|X|\) is a regular cardinal.
\end{thm}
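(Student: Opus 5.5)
Fix a moiety \(D\) of \(X\). Let \(C,H\) be the abelian groups from \cref{lem:transfinite}, \(H_D,V_D\) those from \cref{full diagonal}, and \(A_0,\dots,A_4\) those from \cref{moietysupp}. We aim to show
\[
\Sym(X)= C\,H\,H_D\,V_D\,A_0A_1A_2A_3A_4 .
\]
(Other orderings are also possible, for instance by taking inverses.) To prove this, we take an arbitrary \(g\in\Sym(X)\) and peel off factors from the right. The goal is to show that \(g\in ch\,h'v\,\Sym_X(D)\) for suitable elements. The last factor \(\Sym_X(D)\) is then covered by \cref{moietysupp}.

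\emph{Step 1.} Choose a moiety \(N\) of \(X\) such that \((N)g\) is also a moiety. For instance, take \(N=(X\setminus D)g^{-1}\), so that \((N)g=X\setminus D\), and \(N\) is a moiety since \(g\) is a bijection. Apply \cref{lem:transfinite} to the moiety \(M\coloneqq X\setminus N\). This gives \(c\in C\) and \(h\in H\) with \((M)ch\cap D=\varnothing\). Equivalently, \(D\subseteq (N)ch\). Then \(N'\coloneqq (N)ch\) is a moiety containing \(D\).

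\emph{Step 2.} We want a permutation of \(X\) that agrees with \((ch)^{-1}g\) on the relevant points, and that differs from \((ch)^{-1}g\) only by something supported on \(D\). This is where care is needed, and we may have to adjust Step 1. I would rather arrange things so that \(g' \coloneqq (ch)^{-1}g\) maps \(D\) into \(X\setminus D\) in a controlled way; instead, I would pick \(N\) so that \((N)g=D\), namely \(N=(D)g^{-1}\). Then Step 1 yields \(D\subseteq (N)ch\), so \((D)g'\subseteq D\), but we really need \((D)g'=D\).

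So I would strengthen Step 1 as follows. Choose a moiety \(E\subseteq D\) with \(D\setminus E\) also a moiety. Apply \cref{lem:transfinite} with the fixed moiety \(X\setminus E\) in place of \(D\); it holds for any fixed moiety by relabelling. This allows us to arrange \((X\setminus N)ch\cap(X\setminus E)=\varnothing\), that is, \((X\setminus N)ch\subseteq E\). Then \(g'\) maps \(E'\coloneqq(X\setminus N)ch\subseteq E\) onto \(X\setminus D\), and maps the rest onto \(D\).

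\emph{Step 3.} Now use \cref{full diagonal} applied to a moiety \(D^\ast\). Taking \(D^\ast=X\), or more carefully a moiety containing what we need, the group \(\Sym(D^\ast)\) is realised by \(H_DV_D\) restricted to \(D^\ast\). Using this, choose \(h'v\in H_DV_D\) whose restriction to the relevant moiety agrees with \(g'\) where needed, so that \(g'' \coloneqq (h'v)^{-1}g'\) fixes \(X\setminus D\) pointwise. Then \(g''\in\Sym_X(D)\), which lies in \(A_0\cdots A_4\) by \cref{moietysupp}. This gives nine abelian factors.

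The main obstacle is the bookkeeping in Steps 2–3. The issue is that \cref{full diagonal} only controls behaviour on the diagonal moiety, and says nothing about the complement. So we must apply it to a moiety \(D^\ast\) containing \((X\setminus D)(g')^{-1}\) with \(g'\) mapping \(D^\ast\) onto a set that \(H_DV_D\) can reach. The cleanest fix is to apply \cref{full diagonal} to \(X\setminus D\) itself. We first use \(C,H\) to arrange that \(g'\) maps \(X\setminus D\) onto \(X\setminus D\). This is exactly what \cref{lem:transfinite} (with target moiety \(D\), moiety \(M=(D)g^{-1}\), and a symmetric argument ensuring \((X\setminus D)g'\supseteq X\setminus D\)) should give, possibly after replacing \(g\) by \(g^{-1}\). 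I would try this first, and verify that equality rather than mere inclusion can be achieved.
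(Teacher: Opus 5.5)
You have the right three ingredients and the right count \(2+2+5\), but as written the proposal does not close, and the completion you say you would try first cannot work. The gap is Step~3. \cref{full diagonal} has to be applied to one moiety fixed before \(g\) is chosen, and you never fix it.

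\begin{itemize}
\item The suggestion \(D^\ast=X\) is not available. \(X\) is not a moiety of itself, and the conclusion of \cref{full diagonal} would then exhibit \(\Sym(X)\) as a product of two abelian subgroups, contradicting Ito's lower bound of \(3\).
\item ``A moiety containing what we need'' is not a choice.
\item Your ``cleanest fix'' needs a single \(ch\in CH\) with \((X\setminus D)g'=X\setminus D\). That is, \(ch\) must map the arbitrary moiety \((D)g^{-1}\) \emph{exactly} onto \(D\). \Cref{lem:transfinite} only gives \((M)ch\cap D=\varnothing\). A ``symmetric argument'' would produce a different element of \(CH\), which costs extra factors.
\end{itemize}

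Exact equality is in fact false for the groups constructed in \cref{lem:transfinite}. Each \(c\in C\) permutes every block \(B_\beta\), and each \(h\in H\) permutes every fibre \(\{d\}\times A\). So for a moiety \(M\supseteq B_1\), the set \((M)ch\) meets \(\{d_1\}\times A\) in at least two points, while \(D'\times\{0_A\}\) meets it in exactly one.

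What is missing is spare room in the set on which \cref{full diagonal} acts, and your strengthened Step~1 already supplies it.

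\begin{itemize}
\item Keep \(E\subseteq D\) with \(E\) and \(D\setminus E\) moieties, \(C,H\) from \cref{lem:transfinite} for the fixed moiety \(X\setminus E\), and \(N=(D)g^{-1}\). Then \(E'\coloneqq((X\setminus D)g^{-1})ch\subseteq E\), and \(g'\) maps \(E'\) bijectively onto \(X\setminus D\).
\item Take \(D^\ast\coloneqq E\cup(X\setminus D)\). It is a moiety, since its complement is \(D\setminus E\).
\item The bijection \(g'|_{E'}\colon E'\to X\setminus D\) extends to a permutation of \(D^\ast\). This holds because \(D^\ast\setminus E'\supseteq X\setminus D\) and \(D^\ast\setminus(X\setminus D)=E\) both have size \(|X|\).
\item Hence \cref{full diagonal} gives \(h'v\in H_{D^\ast}V_{D^\ast}\) agreeing with \(g'\) on \(E'=(X\setminus D)(g')^{-1}\). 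So \((h'v)^{-1}g'\in\Sym_X(D)\), and \cref{moietysupp} finishes: \(g\in CHH_{D^\ast}V_{D^\ast}A_0\cdots A_4\).
\end{itemize}

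With this fix, your argument is the paper's proof up to inversion. There, \cref{lem:transfinite} pushes an arbitrary moiety into \(D\), and \cref{full diagonal} is applied on \(D\cup D'\) with \(D'\) a moiety of \(X\setminus D\); this \(D'\) is the same spare room. One gets that \(fA_0A_1A_2A_3\) contains a permutation fixing \(D\) pointwise, and then inverts.
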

\begin{proof}
Let \(D\) be a moiety of \(X\). By \cref{lem:transfinite} (applied to
\(X\setminus D\)), there are abelian subgroups \(A_0,A_1\leq \Sym(X)\) such that
for all moieties \(M\) of \(X\), there exists \(a_0a_1\in A_0 A_1\)
with \((M)a_0a_1\subseteq D\). Let \(D'\) be a moiety of \(X\setminus D\). By
\cref{full diagonal}, let \(A_2,A_3\leq \Sym(X)\) be abelian subgroups such that
every permutation of \(D\cup D'\) has an extension in \(A_2A_3\). Then 
for all moieties \(M\) of \(X\) and bijections \(\phi\colon M\to D\), there
exists \(a_0a_1a_2a_3\in A_0 A_1 A_2 A_3\)  extending \(\phi\). Equivalently, if \(f\in \Sym(X)\), then
\(fA_0A_1A_2A_3\) contains a permutation fixing \(D\) pointwise. By
\cref{moietysupp}, there are abelian groups \(A_4,A_5,A_6,A_7,A_8\leq \Sym(X)\)
such that every permutation fixing \(D\) pointwise belongs to
\(A_4A_5A_6A_7A_8\). Thus 
\[\Sym(X)\subseteq A_4A_5A_6A_7A_8(A_0A_1A_2A_3)^{-1}=A_4A_5A_6A_7A_8A_3A_2A_1A_0,\]
as required.
\end{proof}

Combining \cref{thm:atleast4} and \cref{thm:symupper} now completes our bounds on \(\operatorname{cgw}(\Sym(\N))\).
\begin{theorem_no_number}[A]
   \(4 \leq \operatorname{cgw}(\Sym(\N)) \leq 9\). 
\end{theorem_no_number}
\begin{cor}
    \(4\leq \cw(\operatorname{Inj}(\N)) \leq 10\)
\end{cor}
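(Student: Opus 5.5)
For the upper bound, I would write \(\operatorname{Inj}(\N)\) as \(\Sym(\N)\) times one extra commutative submonoid. Every \(f\in\operatorname{Inj}(\N)\) has \(|\N\setminus\im(f)|=\kappa\) for some \(\kappa\in\N\cup\{\aleph_0\}\). The plan is to fix a commutative submonoid \(E\leq\operatorname{Inj}(\N)\) such that, for each such \(\kappa\), it contains an injection \(e_\kappa\) whose image has complement of size exactly \(\kappa\).

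\begin{itemize}
\item A natural first attempt is the monoid generated by the shift \(s\colon x\mapsto x+1\). Its elements are \(s^n\), and \(s^n\) misses exactly \(n\) points. This covers every finite \(\kappa\), but it does not cover \(\kappa=\aleph_0\).
\item To also get \(\kappa=\aleph_0\), I would identify \(\N\) with \(\N\times\N\). Take \(E\) to be the monoid of maps \((a,b)\mapsto(a,b+n_a)\), where \((n_a)\) is a sequence in \(\N\) that is eventually zero or is arbitrary. This monoid is commutative, being isomorphic to \((\N^\N,+)\). Its elements are injective, and the complement of the image has size \(\sum_a n_a\), so every \(\kappa\) occurs.
\end{itemize}

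Given \(f\) with \(|\N\setminus\im f|=\kappa\), choose \(e=e_\kappa\in E\) with \(|\N\setminus\im e|=\kappa\). Then there is a bijection \(\sigma\colon\N\to\N\) with \(\sigma\) carrying \(\im(f)\) onto \(\im(e)\) and the complement onto the complement. The map \(\pi:=f\sigma e^{-1}\), taken as a permutation of \(\N\) by composing with the inverse of \(e\) on its image, is then a bijection of \(\N\). Hence \(f=\pi e\sigma^{-1}\). This is not yet of the form \(\Sym\cdot E\), so I would adjust it.

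Define \(g:=f\sigma\). Then \(\im(g)=\im(e)\), so \(g e^{-1}\in\Sym(\N)\), and \(g=(ge^{-1})e\). It follows that \(f=(ge^{-1})e\sigma^{-1}\). To absorb \(\sigma^{-1}\), I would instead choose the bijection between \(\im(f)\) and \(\im(e)\) more directly. Since \(\im(f)\) and \(\im(e)\) are both infinite, take any bijection \(\tau\colon\im(f)\to\im(e)\). Then \(f\tau e^{-1}=:\pi\) is a bijection \(\N\to\N\), and \(f=\pi e\tau^{-1}\) on the image. So the honest route is to use \(e\) on the left: \(f=\rho\cdot e\), where \(\rho\in\Sym(\N)\) with \(\rho\) sending \(\im(e)\) onto \(\im(f)\) compatibly.

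Concretely, let \(\rho\in\Sym(\N)\) be any extension of the bijection \((x)e\mapsto(x)f\) from \(\im(e)\) to \(\im(f)\). Such an extension exists because the two complements have the same size \(\kappa\). Then \(e\rho=f\). By \cref{thm:symupper} we have \(\Sym(\N)=A_0\cdots A_8\), and therefore
\[
\operatorname{Inj}(\N)=E\,A_0A_1\cdots A_8,
\]
which gives the bound \(\cw(\operatorname{Inj}(\N))\le 10\).

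For the lower bound, suppose \(\operatorname{Inj}(\N)=C_1C_2C_3\) with the \(C_i\) commutative submonoids. Since \(\Sym(\N)\) is the group of units, I would restrict to units. If \(c_1c_2c_3\) is a bijection, then \(c_3\) must be surjective, and \(c_1\) is injective. From this I would argue that each factor lies in the group of units. The cleaner route is to rerun the proof of \cref{thm:atleast4} for injective monoids, since \cref{equiv-A-inj} and the counting lemmas only used injectivity and commutativity. This is where I expect the main obstacle: the arguments invoke inverses, for example \(a^{-1}\), \(c^{-1}\), and \(CBA=(ABC)^{-1}\).

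The first thing I would try is to show that in any factorization \(c_1c_2c_3=\pi\in\Sym(\N)\) with all \(c_i\) injective, each \(c_i\) is a bijection. Since \(c_1c_2c_3\) is surjective, \(c_3\) is surjective, and being injective it is a unit. Then \(c_1c_2=\pi c_3^{-1}\) is a bijection, so \(c_2\) is also a unit, and \(c_1\) in turn is a unit. It follows that \(\Sym(\N)=U_1U_2U_3\), where \(U_i\) is the (abelian) group of units of \(C_i\). This contradicts \cref{thm:atleast4}.
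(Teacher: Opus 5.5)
Your argument is correct and is essentially the paper's. For the upper bound, a commutative monoid realising every corank in \(\N\cup\{\aleph_0\}\) is placed in front of the nine abelian groups from \cref{thm:symupper}; for the lower bound, every factor of a permutation must itself be a permutation, which the paper phrases as \(\operatorname{Inj}(\N)\setminus\Sym(\N)\) being an ideal.

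There are two small corrections:
\begin{enumerate}
\item Your \(E\) must allow arbitrary sequences \((n_a)\), because eventually-zero sequences never give corank \(\aleph_0\).
\item The \(c_i\) are only known to lie in \(C_i\cap\Sym(\N)\), not in the unit group of \(C_i\), since their inverses need not lie in \(C_i\). So you should conclude \(\Sym(\N)=(C_1\cap\Sym(\N))(C_2\cap\Sym(\N))(C_3\cap\Sym(\N))\) and apply \cref{thm:atleast4} (via \cref{independance_of_definition}) to these commutative submonoids.
\end{enumerate}
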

\begin{proof}
    Using Theorem A and noting that \(\cw(\Sym(\N)) =
    \operatorname{cgw}(\Sym(\N))\) (\cref{independance_of_definition}), it
    suffices to show that \(\cw(\operatorname{Inj}(\N)) \in \{\cw(\Sym(\N)),
    \cw(\Sym(\N)) + 1\}\). Suppose that \(C_0,C_1,\ldots, C_{k-1}\leq \operatorname{Inj}(\N)\)
    are commutative submonoids  showing that \(\cw(\operatorname{Inj}(\N))=k\).
    Then \(C_0C_1\ldots C_{k-1}\supseteq \Sym(\N)\).
    As \(\Sym(\N)\) is the complement of an ideal of \(\operatorname{Inj}(\N)\), it follows that
    \[(C_0\cap \Sym(\N))(C_1\cap \Sym(\N))\ldots (C_{k-1}\cap \Sym(\N))= \Sym(\N).\]
    Thus \(\cw(\Sym(\N))\leq \cw(\operatorname{Inj}(\N))\).
    
    It remains to show that \(\cw(\operatorname{Inj}(\N))\leq \cw(\Sym(\N))+1\).
    Suppose that \(C_0,C_1,\ldots, C_{k-1}\leq \Sym(\N)\) are commutative subgroups  showing that \(\cw(\Sym(\N))=k\).
    Let \(\makeset{M_i}{\(i\in \N \cup \{\aleph_{0}\}\)}\) be a partition of \(\N\) into moieties.
    For each \(i\in \N \cup \{\aleph_{0}\}\), let \(f_i \colon M_i\to M_i\) be an
    injective function with \(|M_{i}\setminus \im(f_i)|=i\) and let
    \(\bar{f_i}\) be the extension of \(f_i\) fixing all points not in \(M_i\).
    Let \(C_{-1}\) be the monoid generated by the elements \(f_i\) for \(i\in \N \cup \{\aleph_{0}\}\).
    Note that \(C_{-1}\) is commutative and \(\{|\N \setminus \im(c)| \mid c\in  C_{-1}\} = \N \cup \{\aleph_0\}\). 
    It follows that
    \[C_{-1}C_0C_1\ldots C_{k-1}=C_{-1}\Sym(\N)=\operatorname{Inj}(\N). \qedhere\]
\end{proof}
%
%
%

\section{Partial Transformation Monoids}
\label{sec:monoids}
This section covers the proofs of Theorems B and C; that is, that the commutative submonoid
widths of \(\N^\N\), \(P_\N\) and \(I_\N\) are all \(3\), and the commutative inverse
submonoid width of \(I_\N\) is infinite. Recall that \(\N^\N\) is just the monoid
of functions from \(\N\) to itself, and thus \(\N^\N \leq P_\N\). We can do some of
the work for all three cases by working with submonoids of \(P_\N\), since this
contains both \(I_\N\) and \(\N^\N\).

\begin{prop}
\label{PN-lower-bound}
Let \(M\) be a submonoid of \(P_{\N}\) containing \(\Sym(\N)\).
Then
    \(\cw(M) > 2\).
\end{prop}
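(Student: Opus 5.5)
We argue by contradiction: suppose \(A,B\) are commutative submonoids of \(M\) with \(AB=M\). In particular \(\Sym(\N)\subseteq AB\). The plan is to strip the problem down to the fully defined injective part of \(A\), as in \cref{equiv-A-inj}, and to a dual \emph{surjective} part of \(B\). Then we run a Baire-category and counting argument modelled on \cref{lem:LBSym1} and \cref{lem:SymNLB2}, with only two factors.

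\textbf{Step 1 (normal form).} If \(f=ab\in\Sym(\N)\) with \(a,b\in P_\N\), then \(\dom(a)\supseteq\dom(f)=\N\), and \(a\) is injective since \(f\) is. Moreover \(b|_{\im(a)}\) is a bijection \(\im(a)\to\N\), so \(b\) is surjective. Hence every permutation factors as \(ab\) with \(a\in A_{\textrm{inj}}\) and \(b\in B_{\textrm{sur}}\), the latter being the (commutative) submonoid of surjective elements of \(B\).

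We will also prove a dual of \cref{equiv-A-inj} for \(B_{\textrm{sur}}\). Let \(\approx\) be the least equivalence relation containing all pairs \((x,(x)b)\) with \(b\in B_{\textrm{sur}}\). The aim is to show that each \(b\in B_{\textrm{sur}}\), restricted to suitable pieces of an \(\approx\)-class, is determined by finitely many values. The model is the computation \((y)fh=(y)hf\), now with right cancellation by a surjection in place of injectivity. I would first check exactly what rigidity survives here, since \(b\) need not be injective.

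\textbf{Step 2 (cases on \(\sim_A\)).} Suppose first that some \(\sim_A\)-class \([x]_{\sim_A}\) is infinite. By \cref{equiv-A-inj} there are only countably many restrictions \(a|_{[x]_{\sim_A}}\). Take a moiety \(M_A\) of this class and a moiety \(M'\) of \(\N\), and write \(\mathrm{Bijections}(M_A,M')\) as a countable union, indexed by these restrictions \(a\), of sets of the form \(\{ab|_{M_A}\}\). Applying \cref{prop:topfact} yields a single \(a\) and a finite partial bijection \(p\) such that the elements of \(B\) alone realise every finite extension of \(p\) from \(M_Aa\) to \(M'\), on an uncountable family. As in \cref{lem:LBSym1}, all points of \(M_Aa\setminus\dom(p)\) then become related under the relation generated by \(B\), so the restrictions of elements of \(B\) to \(M_Aa\) form a countable set. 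This contradicts uncountability.

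Suppose instead that all \(\sim_A\)-classes are finite. Choose a large finite union \(X\) of \(\sim_A\)-classes and a finite target \(Y\) with \(|Y|=|X|\). Every bijection \(X\to Y\) then lies in \(S_{A,X,X}\) times the set of restrictions of elements of \(B\) mapping \(X\) into \(Y\). Bounding the first factor by \(3^{|X|}\) via \cref{techlem}, and the second by \(3^{|Y|}\) via \cref{techlem} or its analogue for \(B_{\textrm{sur}}\), gives \(|X|!\le 9^{|X|}\), which fails for large \(|X|\).

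\textbf{Main obstacle.} The hard step is controlling \(B\). The factor \(b\) is only surjective, and \(B\) need not lie in \(\Sym(\N)\), so \cref{equiv-A-inj} and \cref{techlem} do not apply to it directly. My first attempt would be to restrict to the set \(\im(a)\) on which \(b\) acts bijectively. I would choose \(Y\) (in the finite case) and \(M'\) (in the infinite case) inside a region where the relevant elements of \(B\) act injectively. I would then apply \cref{equiv-A-inj} to the commutative monoid generated by those restrictions. If that fails, I would take inverses: from \(f^{-1}=b'\dots\), factorising \(f^{-1}\) gives a symmetric role to \(B\), and I would redo Step 2 with the roles of \(A\) and \(B\) swapped.
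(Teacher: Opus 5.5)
\textbf{There is a genuine gap in your infinite case.} It sits at the step ``as in \cref{lem:LBSym1}, \ldots\ the restrictions of elements of \(B\) to \(M_Aa\) form a countable set''.

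In \cref{lem:LBSym1} that step is \cref{equiv-A-inj} applied to \(B\). It needs agreement of two elements at one point to propagate through the whole class in \emph{both} directions, and the backward direction cancels by an injective element. Here elements of \(B\) need not be injective, and \(B_{\textrm{inj}}\) may be trivial.

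Commutativity alone gives only forward propagation: if \((m_0)b_n=n\) with \(b_n\in B\), then \((n)b=((m_0)b)b_n\) for every \(b\in B\). In your set-up this controls \(b\) on the \emph{target} \(M'\setminus\im(p)\). The contradiction, however, needs control on the \emph{domain} \(M_Aa\). Since \(M'\) is fixed before \cref{prop:topfact} produces \(a\), nothing relates the two sets.

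Your proposed repairs do not close this:
\begin{itemize}
\item Restriction to a set such as \(\im(a)\) that is not \(B\)-invariant is not a homomorphism. So the restricted maps need not commute or form a monoid to which \cref{equiv-A-inj} applies.
\item Passing to inverses is unavailable. \(M=AB\) gives no factorisation \(M=BA\), and non-injective elements of \(B\) have no inverses in \(P_\N\).
\end{itemize}

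\textbf{The missing idea is to make domain and target coincide.} Take \(X=[x]_{\sim_A}\) itself and set \(A'=\{f|_X\mid f\in A_{\textrm{inj}}\}\). Then \(\Sym(X)\subseteq A'B\): extend a permutation of \(X\) by the identity and factor it. Apply \cref{prop:topfact} to \(\Sym(X)=\bigcup_{a\in A'}(aB\cap\Sym(X))\).

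This yields \(a'\in A'\) and a finite partial permutation \(p\) of \(X\) with the following property. For a fixed \(y\in X\setminus\dom(p)\), every \(z\in X\setminus\im(p)\) equals \(((y)a')b_z\) for some \(b_z\in B\). Forward propagation then shows that \(b|_X\) is determined by \(b|_{\im(p)\cup\{(y)a'\}}\), so \(\{b|_X\mid b\in B\}\) is countable. Since \(A'\) is countable by \cref{equiv-A-inj} and maps \(X\) into itself, \(A'B\) is countable, contradicting \(\Sym(X)\subseteq A'B\).

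\textbf{Your finite case is essentially fine,} but you need no analogue of \cref{techlem} for \(B_{\textrm{sur}}\). When every \(\sim_A\)-class is finite, each element of \(A_{\textrm{inj}}\) permutes each class. So \(A_{\textrm{inj}}\subseteq\Sym(\N)\), hence \(b=a^{-1}f\in\Sym(\N)\), and \(\Sym(\N)=A_{\textrm{inj}}(B\cap\Sym(\N))\). Your count \(|X|!\le 9^{|X|}\) then goes through with \cref{techlem} applied to the abelian groups generated by the two factors. This is a more elementary route than the paper's, which simply invokes \cref{thm:atleast4} here.
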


\begin{proof}
    Suppose for a contradiction that \(M = AB\), for commutative
    submonoids \(A, B \leq M\).
    Let \(\sim\) be the equivalence relation on \(\N\) defined in 
    \cref{equiv-A-inj}, using \(A\) as our commutative submonoid. We consider
    the cases when all \(\sim\)-classes are finite,
    and when there exists \(x \in \N\) such that \([x]_\sim\) is infinite.  

    Case 1: Every \(\sim\)-class is finite. Note that for all \(x\in \N\) and
    \(f\in A_{\textrm{inj}}\), we have \(([x]_{\sim})f \subseteq [x]_{\sim}\). As any
    injective transformation of a finite set is surjective, it follows that all
    elements of \(A_{\textrm{inj}}\) permute each class of \(\sim\). Thus
    \(A_{\textrm{inj}}\) is a commutative submonoid of \(\Sym(\N)\). As each
    element of \(\Sym(\N)\) is the product of an element of
    \(A_{\textrm{inj}}\subseteq \Sym(\N)\) and an element of
    \(B\) (that is, a permutation and a function), it follows
    that 
    \[\Sym(\N)=A_{\textrm{inj}}(B \cap \Sym(\N)).\]
    This contradicts \cref{thm:atleast4}.

    Case 2: There exists \(x \in \N\) such that \([x]_\sim\) is infinite. Set
    \(X = [x]_\sim\), and define
    \[
        A' = \{f|_X \mid f \in A_\textrm{inj}\}.
    \]
    By the definition of \(\sim\), we have that \((X)A_\textrm{inj} \subseteq
    X\), and so the restriction map \(f \mapsto f|_X\) is a homomorphism. Thus
    \(A'\) is a quotient of \(A_\textrm{inj}\), and is therefore a commutative
    monoid. 
    We will show that \(\Sym(X) \subseteq A' B\) and
    that \(A'\) is countable. For the latter claim, by \cref{equiv-A-inj},
    we have that each element \(f \in A'\) is entirely determined by
    \((x)f\). As \((x)f \in X\), which is countable, there are at most
    countably many choices for \((x)f\) and thus \(f\). In particular,
    \(A'\) is countable.

     We next show that \(\Sym(X) \subseteq A' B\). Let \(\Sym_\N(X) = \{f \in
     \Sym(\N) \mid (y)f = y \textrm{ for all } y \in  \N \setminus X \} \leq
     \Sym(\N)\). Since \(AB = M\), we have that \(\Sym(\N) \subseteq A_\text{inj}
     B\). It follows that \(\Sym_\N(X) \subseteq A_\textrm{inj} B\). Let
     \(h \in \Sym(X)\) be arbitrary. Let \(\bar{h} \in \Sym_\N(X)\) be such that
     \(h = \bar{h} |_X\). Then there exist \(f \in A_\textrm{inj}\) and
     \(g \in B\), such that \(\bar{h} = fg\). Thus
     \[
        f|_X g = (fg)|_X = \bar{h}|_X = h,
     \]
     and so \(h \in A' B\), as required.
     
    We have now established that \(\Sym(X) \subseteq A' B\), and so
    \[\Sym(X)=\bigcup_{a\in A'}aB \cap \Sym(X).\]  
    By \cref{prop:topfact},
 there exists \(a'\in
    A'\) and a partial permutation \(p\) of \(X\) with finite domain such
    that  
    for every finite partial permutation \(p'\) extending \(p\), we have that
    \(a'B\) contains an extension of \(p'\).
    So for all finite partial permutations
    \(p'\) of \(X\) extending \(p\), there exists \(b\in B\) with
    \((a'b)|_{\dom(p')}= p' \). Since for each \(y \in X \setminus
    \dom(p)\) and \(z \in X \setminus \im(p)\) there exists a partial
    function \(p'\) with finite domain such that \(p'|_{\dom(p)} = p\) and \((y)p' = z\), we have that if \(y\in
    X\setminus \dom(p)\), then \((y)a'B \supseteq X\setminus
    \im(p)\). Let \(y \in X\setminus \dom(p)\) be fixed. For all  \(z\in X\setminus \im(p)\) fix some \(b_z\in
    B\) with
    \((y)a'b_z=z\).
    For all \(b\in B\), it follows that 
    \[
        (\{z\})b
        = (\{(y)a'b_z\})b 
        = (\{(y)a'\})b_zb 
        = (\{(y)a'\})bb_z 
        = (\{(y)a'\}b|_{\im(p)\cup \{(y)a'\}})b_z.
    \]
    Thus 
    if \(b\in B\), then \(b|_X\) is determined by
    \(b|_{\im(p)\cup \{(y)a'\}}\). It follows
    that \(B|_{X}\) is countable. As \(A'\leq \operatorname{Inj}(X)\), it follows
    that \(|A'B|\leq |A'||B|_X|\). Thus
    \(A'B\) is countable. This is a contradiction, as
    \(A'B \supseteq \operatorname{Sym}(X)\).
\end{proof}

Unfortunately, unlike our proof that the commutative submonoid width of the three
monoids we are studying is at least \(3\), the remaining
direction requires two differing arguments. The proofs are `morally' similar, but cannot
be applied to \(\Sym(\N)\); firstly \(\cw(\Sym(\N)) > 3\) by \cref{thm:atleast4}, and
additionally, the arguments in the proofs strongly leverage some form of non-injectivity. We begin with the full transformation monoid.
\begin{prop}
    \label{X^X-upper}
    If \(X\) is an infinite set, then \(\cw(X^X)\leq 3\).
\end{prop}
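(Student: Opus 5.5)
The plan is to write every \(f\in X^X\) as \(f=abc\), where \(a\) and \(b\) come from two fixed abelian groups and \(c\) comes from a fixed commutative monoid of highly non-injective maps. Here \(ab\) handles the bijective part of \(f\) and \(c\) handles the rest. Fix a moiety \(D\) of \(X\) and a point \(z\in X\setminus D\). For each \(\varphi\colon D\to X\setminus D\), let \(c_\varphi\in X^X\) agree with \(\varphi\) on \(D\) and send all of \(X\setminus D\) to \(z\). Let \(\kappa_z\) be the constant map with value \(z\).

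The first step is to check that \(C:=\{\id\}\cup\{c_\varphi\}\cup\{\kappa_z\}\) is a commutative submonoid. Take two of the non-identity elements and form their product in either order. A point of \(D\) is first sent into \(X\setminus D\) and then to \(z\). A point of \(X\setminus D\) is first sent to \(z\), and since \(z\in X\setminus D\), it is then sent to \(z\) again. So every such product equals \(\kappa_z\), and \(C\) is commutative.

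The second step supplies the groups \(A,B\). Applying \cref{full diagonal} (possibly to \(D\cup D'\) with \(D'\) a moiety of \(X\setminus D\)), I obtain abelian \(A,B\leq\Sym(X)\) such that, for every \(f\), some \(ab\in AB\) maps a chosen set \(T_f\) bijectively onto \(D\). Here \(T_f\) is a transversal of the kernel of \(f\), enlarged to a moiety where necessary. Then I would set \(\varphi\) so that \(c_\varphi\) on \(D\) reproduces the values of \(f\).

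The main obstacle is the mismatch between the image of \(f\), which may meet \(D\) or be all of \(X\), and the image of \(C\), which lies in \(X\setminus D\). Points outside \(T_f\) are also problematic, since \(ab\) does not obviously send them into \(D\), while \(c\) collapses all of \(X\setminus D\) to \(z\).

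My first attempt at handling this is to reverse the roles and use \(f=c\,a\,b\). Then \(c_\varphi\) does the collapsing, namely \(X\to X\setminus D\) with prescribed fibres, and \(ab\) is a permutation carrying \(X\setminus D\) onto \(X\) in the prescribed way. For this I would need a variant of \cref{full diagonal} giving every bijection between two fixed moieties, rather than only every permutation of one moiety. The proof of \cref{full diagonal}, with coordinates \(X=A\times A\) and horizontal and vertical shifts, appears to adapt if the diagonal is replaced by a graph of a bijection. The genuinely delicate points are two:
\begin{itemize}
\item the fibre structure of \(f\) must be realized by some \(c_\varphi\), which forces every point of \(X\setminus D\) into a single fibre, that of \(z\);
\item \(|X\setminus\im f|\) may be small, which constrains the moiety bookkeeping.
\end{itemize}
I would repair both by letting \(C\) send \(X\setminus D\) not to a single point \(z\) but through a fixed idempotent-compatible map into \(X\setminus D\). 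I would then check carefully that commutativity survives this change.
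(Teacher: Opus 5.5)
Your plan cannot be completed. This is a structural obstruction, not a bookkeeping issue. Two of your three factors, \(A\) and \(B\), are abelian subgroups of \(\Sym(X)\). Take any \(f\in\Sym(X)\) and suppose \(f=abc\) (or \(cab\), or \(acb\)) with \(a\in A\), \(b\in B\), \(c\in C\). Then \(c=(ab)^{-1}f\) (resp.\ \(f(ab)^{-1}\), \(a^{-1}fb^{-1}\)) is itself a permutation. Hence \(\Sym(X)\subseteq AB\,(C\cap\Sym(X))\) (in the appropriate order). The set \(C\cap\Sym(X)\) is a commutative monoid of permutations, so the group \(G\) it generates is abelian. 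This makes \(\Sym(X)=ABG\) a product of three abelian subgroups. For \(X=\N\) that contradicts \cref{thm:atleast4}.

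Your concrete \(C\) fails even sooner. There \(C\cap\Sym(X)=\{\id\}\), so you would need \(\Sym(X)=AB\), which is impossible by Ito's theorem since \(\Sym(X)\) is not metabelian. So no ``idempotent-compatible'' modification of \(C\) can rescue the argument. Your unresolved issues (fibre structure, and \(ab\) having to carry \(X\setminus D\) ``onto'' \(X\)) are symptoms of this. Separately, \cref{full diagonal} only realises permutations of a fixed moiety. It does not move an arbitrary transversal \(T_f\) onto \(D\).

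The missing idea is that the outer factors must be non-invertible, so that even bijections get factored through non-bijections. In \(X^X\), unlike in \(\operatorname{Inj}(\N)\), the non-permutations do not form an ideal, and the paper exploits exactly this. It fixes a commutative monoid operation \(+\) on \(X\) and a bijection \(\phi\colon X\to X^2\). The fibrewise shifts \((n,m)\mapsto(n,m+(n)f)\), for \(f\in X^X\), form a commutative monoid, being a homomorphic image of \((X^X,+)\). Each \(f\) is recovered by first applying \(l\colon x\mapsto(x,0)\), then the shift for \(f\), then the projection \(r\colon(a,b)\mapsto b\). Transporting along \(\phi\) gives \(X^X=\langle l\phi^{-1}\rangle\,S\,\langle\phi r\rangle\). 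The outer factors are monogenic monoids generated by a non-surjective injection and a non-injective surjection, and no abelian subgroups of \(\Sym(X)\) are used at all.
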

\begin{proof}


We show that there exist commutative submonoids \(A, B, C \leq X^X\) such that
    \(X^X = ABC\). 
    Define an operation \(+\) on \(X\)
    such that \((X,+)\) is a commutative monoid with identity \(0_X\).
    
    Let \(\phi \colon  X \to X^2\) be a bijection and
    define \(\Psi \colon (X^X,+) \to ((X^2)^{(X^2)},\circ)\) by
    \[(n,m)(f)\Psi=(n,m+(n)f).\]
    We must verify that \(\Psi\) is a homomorphism.
    For \(f, g\in X^X\) and \((n,m)\in X^2\), we have
    \[
        (n,m)(f+g)\Psi =
        (n,m+(n)(f+g)) = (n,m+(n)f+(n)g) = (n,m+(n)g)(f)\Psi = (n,m)(f)\Psi(g)\Psi.
    \]
    Thus \(\im(\Psi)\) is a commutative monoid. Let \(S \coloneqq \makeset{\phi
    \circ f \circ \phi^{-1}}{\(f\in \im(\Psi)\)}\) and note that \(S\) is a
    commutative monoid, since \(\im(\Psi)\) is. Let \(l \colon X\to X^2\) be the map
    \(x\mapsto (x,0)\) and \(r \colon X^2 \to X\) be the map \((a,b)\mapsto
    b\). For all \(f\in X^X\) and \(n\in X\), we have
    \begin{align*}
        (n)l\circ (f)\Psi \circ  r= (n,0)(f)\Psi \circ  r=(n,(n)f)r=(n)f.
    \end{align*}
    Thus \(l\circ (f)\Psi \circ  r =f\) for all \(f\in X^X\).
    Equivalently, \((l\phi^{-1}) (\phi(f)\Psi \phi^{-1}) (\phi r) =f\) for all \(f\in X^X\). This implies that
    \[\langle l\phi^{-1} \rangle\cdot S\cdot \langle \phi r \rangle=X^X,\]
    We have expressed \(X^X\) as a product of three commutative submonoids as required.
%
%
%
\end{proof}

In the cases of \(I_X\) and \(P_X\), we can leverage the undefined points as a means
of non-injectivity to produce a slightly easier proof than that for \(T_\N\).
\begin{prop}\label{IX-upper}
Suppose that \(X\) is an infinite set and \(I_X\leq M\leq P_X\).
Then
\(\operatorname{cmw}(M)\leq 3\).
\end{prop}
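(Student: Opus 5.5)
The plan is to imitate the proof of \cref{X^X-upper} and \cref{full diagonal}. We use the undefined points of partial maps in place of the non-injective map \(r\) used there. Fix a bijection \(\phi\colon X\to X^2\) and an abelian \emph{group} structure \((X,+)\) with identity \(0\); we need a group, not just a monoid, so that translations are injective. Everything below is built inside \(I_X\subseteq M\), except possibly the last factor, which must be adapted to \(M\).

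\textbf{The first two factors.} Let \(A\) be the monoid generated by the single partial bijection \(l\colon x\mapsto (x,0)\phi^{-1}\). As a cyclic monoid, \(A\) is commutative. For \(f\in P_X\), define the ``vertical translation''
\[
v_f\colon (n,m)\phi^{-1}\mapsto (n,m+(n)f)\phi^{-1},
\]
defined exactly when \(n\in\dom(f)\). Let \(B\) be the set of all \(v_f\). Each \(v_f\) is injective, so \(v_f\in I_X\). Composition behaves like the map \(\Psi\) in \cref{X^X-upper}: \(v_fv_g=v_{f+g}\), where \(\dom(f+g)=\dom(f)\cap\dom(g)\). Hence \(B\) is a commutative submonoid of \(I_X\). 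By construction
\[
(x)\,l\,v_f=\big(x,(x)f\big)\phi^{-1}\quad\text{for } x\in\dom(f),
\]
and \(l v_f\) is undefined off \(\dom(f)\). Thus \(l v_f\) already has the correct domain, and encodes \(f\) in its second coordinate.

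\textbf{The third factor.} We need a commutative \(C\le M\) that reads off the second coordinate. The natural candidate is the projection \(r\colon (n,m)\phi^{-1}\mapsto m\). We would take \(C=\langle r\rangle\) when \(r\in M\), in particular when \(M=P_X\); then \(f=l\,v_f\,r\), which gives \(M=ABC\). Some care is needed because \(l,v_f,r\) must lie in \(M\). They lie in \(I_X\cup\{r\}\), but for an arbitrary intermediate \(M\) we must also check that the products \(l v_f r\) range exactly over \(M\). For this, restrict \(B\) to those \(v_f\) with \(f\in M\); this works provided \(f\mapsto v_f\) respects \(M\).

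\textbf{The main obstacle: \(M=I_X\), where \(r\notin M\).} When \(M=I_X\), the map \(r\) is not injective, and a single fixed partial bijection cannot send all the points \((n,(n)f)\), over all \(f\), injectively to \((n)f\). My first attempt is to replace \(\langle r\rangle\) by a commutative monoid of ``horizontal translations''
\[
h_g\colon (n,m)\phi^{-1}\mapsto (n+(m)g,m)\phi^{-1}.
\]
Taking \(g=-f^{-1}\) on \(\im(f)\), this sends \((n,(n)f)\) to \((0,(n)f)\). So \(l\,v_f\,h_g\) maps \(x\mapsto (0,(x)f)\phi^{-1}\), which is injective because \(f\) is. This realises every \(f\in I_X\) up to the fixed partial bijection \(\sigma\colon (0,y)\phi^{-1}\mapsto y\) applied on the right.

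To remove \(\sigma\) without a fourth factor, I would absorb it into \(C\). Choose \(\phi\) so that \(\{0\}\times X\) and \(X\times\{0\}\) are arranged compatibly, and replace each \(h_g\) by \(h_g\) followed by a fixed partial bijection that commutes with all horizontal translations. Alternatively, I would use the trick \(\Sym(X)\subseteq A_1A_0A_2A_0A_1\)-style conjugation from \cref{moietysupp}, re-choosing the identification so that \(l\) and the landing row coincide. Verifying that the modified \(C\) is still commutative and that the domains match is where I expect the real work to lie.
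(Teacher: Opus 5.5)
Your construction is correct when \(r\in M\), but that case is only \(M=P_X\). Indeed \(l\) and all \(v_f\) lie in \(I_X\subseteq M\), and \(l\,v_f\,r=f\) for every \(f\in P_X\), so \(r\in M\) forces \(M=P_X\). The case \(M=I_X\), which Theorem B needs, and every intermediate \(M\) are left open. This is a genuine gap, not a routine check.

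\begin{itemize}
\item The horizontal translations give \(f=l\,v_f\,h_g\,\sigma\), which uses four commutative factors, not three.
\item The maps \(h_g\sigma\) do not commute with one another in general. If \(g,g'\) disagree at every point, then \(h_g\sigma\,h_{g'}\sigma\) and \(h_{g'}\sigma\,h_g\sigma\) are non-empty maps with disjoint domains.
\item A fixed partial bijection \(\tau\) commuting with all \(h_g\) must map each row \(\{(n,m)\phi^{-1}\mid n\in X\}\) into itself; to see this, choose \(g\) with \((m)g=0\neq(m')g\). But \(l\,v_f\,h_g\) lands in the row indexed by \((x)f\), so the last factor must send a point of the row indexed by \(y\) to \(y\), for every \(y\in X\). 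That forces the second coordinate of \((y)\phi\) to be \(y\) for all \(y\), which no bijection \(\phi\colon X\to X^2\) satisfies.
\item For intermediate \(M\) containing non-injective maps, the strategy cannot work even in principle. The maps \(l,v_f,h_g,\sigma\) are all injective, and the choice \(g=-f^{-1}\) presupposes that \(f\) is injective. The one non-injective decoder you have, \(r\), lies in \(M\) only when \(M=P_X\).
\end{itemize}

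The missing idea is to stop encoding \(f\) in coordinates and decoding it. Instead, put all of \(M\) into the middle factor after conjugating it into a zero semigroup.
\begin{itemize}
\item Take total injections \(t,u\colon X\to X\) with \(\im t\cap\im u=\varnothing\).
\item For \(f\in M\), the map \(t^{-1}fu\) lies in \(M\), since \(t^{-1},u\in I_X\subseteq M\). It is a partial map from \(\im t\) into \(\im u\), so the product of any two such maps is the empty map.
\item Hence \(B=\{t^{-1}fu\mid f\in M\}\cup\{\id_X\}\) is a commutative submonoid of \(M\).
\item With \(A=\langle t\rangle\) and \(C=\langle u^{-1}\rangle\), we get \(f=(tt^{-1})f(uu^{-1})=t(t^{-1}fu)u^{-1}\in ABC\) for every \(f\in M\).
\end{itemize}
This is the paper's argument, written with maps acting on the right; the paper's displayed formula has \(u\) and \(u^{-1}\) interchanged. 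It treats all \(I_X\le M\le P_X\) uniformly and needs no group structure on \(X\). All the non-injectivity of \(M\) is carried by the middle factor, so no decoding map is ever required.
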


\begin{proof}
    We show that there exist
    commutative submonoids \(A, B, C \leq M\) such that
    \(M = ABC\). 
    Let \(t,u\colon X\to X\) be injective functions with disjoint images.
    Let \(S=t^{-1}Mu^{-1}\).
    Note that for all \(f,g\in S\), we have \(\im(f)\cap \dom(g)\subseteq \im(t)\cap \im(u)=\varnothing\).
    Thus the product of any two elements of \(S\) is the empty function (which also belongs to \(S\)).
    Hence \(S\) is a zero semigroup and in particular, \(S\) is commutative.
    Let \(B\coloneqq S\cup \{\operatorname{id}_X\}\) and note that \(B\) is a commutative submonoid of \(M\).
    Define \(A\) and \(C\) to be the submonoids generated
    by \(t\) and \(u\), respectively.
    We need only show that \(ABC=S\).
    For \(f\in S\), we have
    \[f=(tt^{-1})f(u^{-1}u)=t(t^{-1}fu^{-1})u\in tSu\subseteq ABC,\]
    as required.
\end{proof}

Combining \cref{PN-lower-bound}, \cref{X^X-upper} and \cref{IX-upper} now gives the following.
\begin{theorem_no_number}[B]
    \(\operatorname{cmw}(\N^\N) = \operatorname{cmw}(P_\N) = \operatorname{cmw}(I_\N) = 3\).
\end{theorem_no_number}

We conclude with a theorem that is stronger than Theorem C; that is, we prove
that for every infinite set, the commutative inverse submonoid width of \(I_X\) is
infinite.

\begin{thm}
If \(X\) is an infinite set, then
\(\operatorname{ciw}(I_X)=\infty\).
\end{thm}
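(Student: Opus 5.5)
The plan is to show that no element of \(I_X\) that is fully defined on \(X\) but not surjective can lie in any finite product of commutative inverse submonoids. Such elements exist because \(X\) is infinite: take an injection \(t\colon X\to X\) with \(|X\setminus \im(t)|=1\). This immediately gives \(\operatorname{ciw}(I_X)=\infty\), and no topology or counting is needed.

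First, I will show that every element \(c\) of a commutative inverse submonoid \(C\leq I_X\) satisfies \(\dom(c)=\im(c)\). Since \(C\) is an inverse submonoid, \(c^{-1}\in C\), so \(c\) and \(c^{-1}\) commute. In \(I_X\), with the paper's left-to-right composition, we have \(cc^{-1}=\id_{\dom(c)}\) and \(c^{-1}c=\id_{\im(c)}\). Commutativity therefore forces \(\id_{\dom(c)}=\id_{\im(c)}\), which means \(\dom(c)=\im(c)\). Thus each element of \(C\) is a permutation of its own domain.

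Second, suppose \(C_0,\ldots,C_{k-1}\) are commutative inverse submonoids with \(I_X=C_0\cdots C_{k-1}\), and write \(t=c_0c_1\cdots c_{k-1}\) with \(c_i\in C_i\). I will show by induction on \(i\) that every \(c_i\) is a permutation of \(X\).
\begin{itemize}
\item By the composition rule, \(\dom(fg)=\dom(f)\cap(\dom(g))f^{-1}\subseteq\dom(f)\). Since \(\dom(t)=X\), this gives \(\dom(c_0)=X\), and hence \(\im(c_0)=X\) by the first step.
\item Now \(c_0\) is a bijection \(X\to X\), so \(\dom(c_0c_1\cdots c_{k-1})=X\) forces \(\dom(c_1\cdots c_{k-1})=X\).
\item Repeating this argument gives \(\dom(c_i)=\im(c_i)=X\) for every \(i\).
\end{itemize}
Hence \(t\) is a product of permutations of \(X\), so \(t\) is surjective. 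This contradicts \(|X\setminus\im(t)|=1\).

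Since \(k\) was arbitrary, \(I_X\) is not a finite product of commutative inverse submonoids, and so \(\operatorname{ciw}(I_X)=\infty\). The only point requiring care is the identity \(\dom(c)=\im(c)\) for elements of commutative inverse submonoids, which relies on \(c^{-1}\) being in the same submonoid. The rest is bookkeeping with domains under left-to-right composition.
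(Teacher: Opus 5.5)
Your proposal is correct and is essentially the paper's proof. Both arguments use \(cc^{-1}=c^{-1}c\) to get \(\dom(c)=\im(c)\) for elements of a commutative inverse submonoid, and then show that any factorisation of a total, non-surjective injection would have to consist entirely of permutations. The only difference is cosmetic: the paper takes the smallest \(j\) with \(\dom(c_j)\neq X\) and derives a contradiction, whereas you run the same step as a forward induction.
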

\begin{proof}
Suppose for a contradiction that \(\operatorname{ciw}(I_X)=k<\infty\).
Then there exist commutative inverse submonoids \(C_0,C_1,\ldots, C_{k-1}\leq I_X\) with
\[C_0C_1 \cdots C_{k-1}=I_X.\]
Let \(f \in I_X\) be an everywhere-defined injective function which is
not surjective.  Then \(f=c_0c_1\ldots c_{k-1}\), where \(c_i\in C_i\) for \(i<k\).
As \(c_i\in C_i\), it follows that \(c_ic_i^{-1}=c_i^{-1}c_i\). In particular,
\(\dom(c_i)=\im(c_i)\) for all \(i<k\). As \(f\not\in \operatorname{Sym}(X)\),
there must be a smallest \(j<k\) such that \(\dom(c_j)\neq X\). In particular,
there exists \(x\in X\setminus\dom(c_j)\). It follows that
\[x \notin \operatorname{dom}(c_j\cdots c_{k-1})= \operatorname{dom}((c_0c_1\cdots c_{j-1})^{-1}f).\]
As \((c_0c_1\cdots c_{j-1})^{-1}\) is a permutation and \(\dom(f)=X\), this is a contradiction.
\end{proof}
\section{Further questions}

We have narrowed the possibilities for \(\cw(\Sym(\N))\) to the values \(4\), \(5\), \(6\), \(7\), \(8\), and \(9\). As such it is plausible that an exact value can be found.

\begin{question}
    What is the commutative subgroup width of \(\Sym(\N)\)?
\end{question}

Many results for \(\Sym(\N)\) and \(\N^\N\) have been generalised to the
automorphism groups and endomorphism monoids of important structures such as
Fra\"{\i}ss\'{e} limits \cite{macpherson,KechrisRosendal,CameronOligomorphic,MarimonPinskerendomorphismmonoidsandpolymorphismclones}. It is not clear to what extent the results of this
paper generalise to these settings. Perhaps the two most common such structures are the random graph \(R\) and the dense linear order \((\Q,\leq)\). 
This leads us to the following question.
\begin{question}
    Do any of the monoids \(\operatorname{Aut}(\Q,\leq)\), \(\operatorname{Aut}(R)\), \(\operatorname{End}(\Q,\leq)\), \(\operatorname{End}(R)\) have finite commutative submonoid width?
\end{question}

\vspace{-4mm}
\section*{Acknowledgements}
The first named author was supported by the Heilbronn Institute for Mathematical
Research. The second named author was supported by the EPSRC Fellowship grant
EP/V032003/1 ‘Algorithmic, topological and geometric aspects of infinite groups,
monoids and inverse semigroups’. 

\bibliography{biblio}{}
\bibliographystyle{amsplain}

\end{document}